\newtheorem{theorem}{Theorem}%[section]
\newtheorem{lemma}[theorem]{Lemma}%[section]
\newtheorem{proposition}[theorem]{Proposition}%[section]
\numberwithin{equation}{section}
\begin{document}

\title[Simple Virasoro modules]{Simple Virasoro modules induced from codimension one subalgebras of 
the positive part}
\author{Volodymyr Mazorchuk and Emilie Wiesner}
\date{\today}

\begin{abstract}
We construct a new five-parameter family of simple modules over the Virasoro Lie algebra.
\end{abstract}

\maketitle

%\vskip 10pt \noindent {\em Keywords:}  Lie algebra;
%highest weight module; triangular decomposition;
%locally nilpotent action
%
%\vskip 5pt
%\noindent
%{\em 2000  Math. Subj. Class.:}
%17B10, 17B20, 17B65, 17B66, 17B68
%
%\vskip 10pt

\section{Introduction and description of the results}\label{s1}

We denote by $\mathbb{N}$ the set of positive integers and by $\mathbb{Z}_+$ the set of all  non-negative integers. 
For a Lie algebra $\mathfrak{a}$ we denote by $U(\mathfrak{a})$ the universal enveloping algebra of $\mathfrak{a}$.

Let $\mathfrak{V}$ denote the complex {\em Virasoro algebra}, that is the Lie algebra with basis $\{\mathtt{c},
\mathtt{l}_i:i\in\mathbb{Z}\}$ and the Lie bracket defined (for $i,j\in\mathbb{Z}$) as follows: 
\begin{displaymath}
[\mathtt{l}_i,\mathtt{l}_j]=(j-i) \mathtt{l}_{i+j}+\delta_{i,-j}\frac{i^3-i}{12}\mathtt{c};\quad
[\mathtt{l}_i,\mathtt{c}]=0.
\end{displaymath}
This algebra plays an important role in various questions of mathematical physics, see \cite{KR}.

Classical classes of simple weight $\mathfrak{V}$-modules are simple highest weight modules, see \cite{FF}, 
and intermediate series modules. Put together, these two classes exhaust all simple weight $\mathfrak{V}$-modules with 
finite dimensional weight spaces, see \cite{Mt}, and even those containing a finite dimensional weight space, see 
\cite{MZ1}. We also refer the reader to the recent monograph \cite{IK} for a detailed survey of the classical part of
the representation theory of $\mathfrak{V}$. There are a number of other examples of simple $\mathfrak{V}$-modules
constructed in \cite{Zh,OW,GWZ,LGZ,MZ2} using various tricks. The present note contributes with a new trick
leading to a new family of examples depending on five parameters.

For a nonzero $z\in\mathbb{C}$, denote by $\mathfrak{a}_z$ the linear span of $\mathtt{l}_k-z^{k-1}\mathtt{l}_1$, 
where $k\geq 2$, in $\mathfrak{V}$. It is easy to check that $\mathfrak{a}_z$ is in fact a Lie subalgebra 
in $\mathfrak{V}$. For a fixed $\mathbf{m}:=(m_2,m_3,m_4)\in\mathbb{C}^3$, define an $\mathfrak{a}_z$-action 
on $\mathbb{C}$ by 
\begin{itemize}
\item $(\mathtt{l}_i-z^{i-1}\mathtt{l}_1)\cdot 1= m_i$ for $i=2,3,4$;
\item $(\mathtt{l}_i-z^{i-1}\mathtt{l}_1)\cdot 1=-(i-4)m_3z^{i-3}+(i-3)m_4z^{i-4}$ for $i>4$.
\end{itemize}
It's straightforward to verify that this gives an $\mathfrak{a}_z$-module. We denote it by $\mathbb{C}_{\mathbf{m}}$.

For a fixed $\theta\in\mathbb{C}$ consider the  $\mathfrak{V}$-module
\begin{displaymath}
\mathrm{Ind}_{z,\theta}(\mathbb{C}_{\mathbf{m}}):=
U(\mathfrak{V})\otimes_{U(\mathfrak{a}_z)}\mathbb{C}_{\mathbf{m}}/
(\mathtt{c}-\theta)U(\mathfrak{V})\otimes_{U(\mathfrak{a}_z)}\mathbb{C}_{\mathbf{m}}.
\end{displaymath}
Our main result is the following claim.

\begin{theorem}\label{themmain}
Let $\mathbf{m}\in\mathbb{C}^3$ and $z\in\mathbb{C}\setminus\{0\}$ be such that
\begin{equation}\label{conditions}
zm_3\neq m_4, \, \, 2zm_2\neq m_3, \, \, 3 zm_3\neq 2m_4, \,\text{ and } \, z^2m_2+m_4\neq 2zm_3.
\end{equation}
Then for any $\theta\in\mathbb{C}$ the $\mathfrak{V}$-module
$\mathrm{Ind}_{z,\theta}(\mathbb{C}_{\mathbf{m}})$ is simple.
\end{theorem}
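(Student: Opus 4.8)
The plan is to analyze the $\mathfrak{V}$-module structure of $M:=\mathrm{Ind}_{z,\theta}(\mathbb{C}_{\mathbf{m}})$ via a PBW-type basis. Choose an ordered basis of $\mathfrak{V}$ adapted to the decomposition $\mathfrak{V}=\mathfrak{a}_z\oplus\mathfrak{b}$, where $\mathfrak{b}$ is spanned by $\mathtt{c},\mathtt{l}_i$ for $i\leq 1$ together with one extra generator — concretely, $\mathtt{a}_z$ has codimension one in the ``positive part'' $\langle\mathtt{l}_i:i\geq 1\rangle$, the missing direction being $\mathtt{l}_1$. Thus as a vector space $M\cong U(\mathfrak{n}^-)\otimes\mathbb{C}[\mathtt{l}_0]\otimes\mathbb{C}[\mathtt{l}_1]$, where $\mathfrak{n}^-=\langle\mathtt{l}_i:i\leq -1\rangle$; equivalently, monomials $\mathtt{l}_{-i_1}\cdots\mathtt{l}_{-i_k}\mathtt{l}_0^{a}\mathtt{l}_1^{b}v$ (with $1\leq i_1\leq\cdots\leq i_k$, $a,b\geq 0$, $v$ the image of $1\otimes 1$) form a basis. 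I would introduce two gradings on this basis: the \emph{degree} $\deg=-\sum i_j+b$ (the $\mathtt{l}_0$-weight shift), which is preserved by the $\mathtt{l}_0$-action up to scalars and organizes $M$ into weight spaces, and a second filtration by the \emph{total length} $k+a+b$ or by the ``positive part'' exponent $b$, which will be the key bookkeeping device.

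The heart of the argument is a reduction lemma: given any nonzero $w\in M$, I want to show $U(\mathfrak{V})w=M$. First, using that each weight space is finite-dimensional in the $\mathtt{l}_1$-direction only up to the $U(\mathfrak{n}^-)\mathbb{C}[\mathtt{l}_0]$ part being infinite — actually the cleaner route — I would act by $\mathtt{l}_k$ for large $k\geq 2$. The point of conditions \eqref{conditions} is exactly that the action of $\mathtt{l}_k-z^{k-1}\mathtt{l}_1$ on $v$ is governed by a linear recursion whose characteristic data (the polynomial $t\mapsto -(t-4)m_3 z^{t-3}+(t-3)m_4z^{t-4}$ and its low-order exceptional values $m_2,m_3,m_4$) is ``nondegenerate.'' The four inequalities say precisely that certain $2\times 2$ (and the mixed $z^2m_2+m_4-2zm_3$) determinants built from consecutive values $m_2,m_3,m_4$ and their $z$-shifts do not vanish; these are what allow one, by taking two different high weights $\mathtt{l}_k$ and $\mathtt{l}_{k+1}$ applied to $v$ (or to a highest-degree component of $w$), to solve for the pure $\mathtt{l}_1 v$ term and hence extract $\mathtt{l}_1^{b}v$ and then, inductively, all of $\mathbb{C}[\mathtt{l}_1]v$ from $w$. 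So step one: reduce to the case $w\in U(\mathfrak{n}^-)\mathbb{C}[\mathtt{l}_0]\mathbb{C}[\mathtt{l}_1]v$ with minimal positive-part exponent, then hit it with suitable $\mathtt{l}_k$, $k\geq 2$, and use \eqref{conditions} to strip off the positive part and land on a nonzero vector in $U(\mathfrak{n}^-)\mathbb{C}[\mathtt{l}_0]v$.

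Step two: show $v\in U(\mathfrak{V})w$ once $w\in U(\mathfrak{n}^-)\mathbb{C}[\mathtt{l}_0]v\setminus\{0\}$. Here I would use the $\mathtt{l}_i$ with $i\geq 1$ (now including $\mathtt{l}_1$ itself, which acts ``freely'' raising no obstruction since $\mathtt{l}_1v=\mathtt{l}_1v$ is a genuine basis vector) to push down the $\mathfrak{n}^-$-degree: a standard argument shows that acting by $\mathtt{l}_1,\mathtt{l}_2,\dots$ on a monomial $\mathtt{l}_{-i_1}\cdots\mathtt{l}_{-i_k}\mathtt{l}_0^a v$ produces, modulo lower-degree terms, a nonzero multiple of a shorter monomial, because the structure constants $(j-i)$ in the bracket are nonzero for the relevant index choices and $\theta$ only enters via the central term $\delta_{i,-j}(i^3-i)/12\,\mathtt{c}=\delta_{i,-j}(i^3-i)/12\,\theta$, which does not obstruct the leading-term analysis. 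Iterating, one reaches $\mathbb{C}^\times v$, hence $v\in U(\mathfrak{V})w$, and then $U(\mathfrak{V})w\supseteq U(\mathfrak{V})v=M$.

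The main obstacle I anticipate is the first reduction — controlling the $\mathtt{l}_1$-tower. Because $\mathtt{l}_1$ is \emph{not} in $\mathfrak{a}_z$, the ``positive'' directions act on $M$ in an entangled way: $\mathtt{l}_k v$ is a \emph{fixed} scalar combination of $\mathtt{l}_1 v$ and $v$, so applying $\mathtt{l}_k$ to a general monomial mixes the $\mathtt{l}_1$-exponent in a nontrivial, $k$-dependent manner. Verifying that conditions \eqref{conditions} are exactly what is needed to invert this mixing — i.e. that the relevant Casoratian-type determinants in $k$ are nonzero under \eqref{conditions} and conversely degenerate when one of the four inequalities fails — is the delicate computational core; I expect it to come down to checking that the sequence $n\mapsto(\mathtt{l}_n-z^{n-1}\mathtt{l}_1)\cdot 1$ satisfies a second-order linear recurrence with constant-in-the-right-sense coefficients, whose two fundamental solutions (roughly $z^{n}$ and $n z^{n}$, reflecting the double structure in the formula for $i>4$) are linearly independent precisely when the boundary data $m_2,m_3,m_4$ are in ``general position'' as encoded by \eqref{conditions}. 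Once that linear-algebra fact is nailed down, the rest is routine PBW bookkeeping.
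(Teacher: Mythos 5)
Your overall architecture (PBW basis $\mathtt{l}_{-i_1}\cdots\mathtt{l}_{-i_k}\mathtt{l}_0^a\mathtt{l}_1^b v$, then act by the operators $\hat{\mathtt{l}}_k=\mathtt{l}_k-z^{k-1}\mathtt{l}_1$, $k\ge 2$, to drive an arbitrary nonzero element of a submodule down to the generator) matches the paper's, and your diagnosis that the entanglement of the $\mathtt{l}_1$-tower with the $\mathfrak{a}_z$-action is the crux is accurate. But there is a genuine gap precisely at that crux, in both of your steps. The claim in step two that acting by $\mathtt{l}_k$ on a monomial produces ``modulo lower-degree terms, a nonzero multiple of a shorter monomial'' is not sufficient: an element $x$ of a submodule is a \emph{sum} $\sum_{\mathbf{i}}\mathtt{l}^{\mathbf{i}}v_{\mathbf{i}}$ whose cofactors $v_{\mathbf{i}}$ live in the infinite-dimensional module induced up to the Borel, and the leading contribution of $[\hat{\mathtt{l}}_k,\mathtt{l}^{\mathbf{i}}]v_{\mathbf{i}}$ at the next-lower monomial $\mathbf{j}$ can be cancelled by $(\hat{\mathtt{l}}_k-m_k)\cdot v_{\mathbf{j}}$ coming from another term of $x$. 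Since $\hat{\mathtt{l}}_k-m_k$ has large image on that cofactor module, such cancellation can genuinely occur for any single $k$, and nothing in your proposal rules it out. The paper's resolution is to show the cancellation cannot happen for $k=2$ and $k=3$ \emph{simultaneously}: after proving the kernels of $\hat{\mathtt{l}}_2-m_2$ and $\hat{\mathtt{l}}_3-m_3$ are exactly $\langle v\rangle$, each of the two correction equations has a one-parameter solution set, and an explicit computation shows these sets are disjoint --- and that disjointness is exactly where the fourth condition $z^2m_2+m_4\neq 2zm_3$ enters. Your reading of \eqref{conditions} as a single Casoratian-type nondegeneracy of the recursion for the $m_k$ does not capture this division of labour: the first condition governs simplicity of the module induced to $\mathfrak{n}$, the second and third make the two correction equations solvable and well-normalized, and the fourth separates their solution sets.

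A second, smaller omission: to normalize the leading cofactor of $x$ to the generator $v$ (which your argument implicitly does whenever it ``applies $\mathtt{l}_k$ to $v$''), one needs to know beforehand that the modules induced to the intermediate subalgebras --- first $\mathfrak{n}$, then the Borel spanned by $\mathfrak{n}$ and $\mathtt{l}_0$ --- are already simple under \eqref{conditions}. These are separate statements with nontrivial proofs: the $\mathfrak{n}$-step uses that every finite-codimension ideal of $\mathfrak{n}$ contains a tail $\mathtt{l}_k,\mathtt{l}_{k+1},\dots$, and the Borel step uses that $\hat{\mathtt{l}}_k$ acts on $\mathbb{C}[\mathtt{l}_1]v$ diagonalizably with simple spectrum $m_k+nz^{k}(1-k)$. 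Without these intermediate simplicity results and without the simultaneous-cancellation lemma above, the concluding ``routine PBW bookkeeping'' does not close.
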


We start the paper by characterizing $\mathfrak{a}_z$ as a special family of codimension one 
subalgebras in the positive part $\mathfrak{n}$ of $\mathfrak{V}$ in Section~\ref{s2}. Our proof of 
Theorem~\ref{themmain} consists of three steps: we first induce $\mathbb{C}_{\mathbf{m}}$ up to $\mathfrak{n}$ in
Section~\ref{s3}, then we study the induction of $\mathbb{C}_{\mathbf{m}}$ to the Borel subalgebra 
of $\mathfrak{V}$ in Section~\ref{s4} and, finally, we complete the proof in Section~\ref{s5}.
\vspace{2mm}

\noindent

{\bf Acknowledgement.} The major part of this work was done during the visit of the second author to Uppsala
in May 2012. The hospitality and financial support of Uppsala University are gratefully acknowledged. The first 
author is partially supported by the Royal Swedish Academy of Sciences and the Swedish Research Council.

\section{Subalgebras in $\mathfrak{n}$ of codimension one}\label{s2}

Denote by $\mathfrak{n}$ the ``positive part'' of $\mathfrak{V}$, that is the Lie subalgebra spanned by
$\mathtt{l}_i$, where $i\geq 1$. In this section we characterize $\mathfrak{a}_z$ as a special family of 
codimension one  subalgebras in $\mathfrak{n}$. We start with the following lemma.

\begin{lemma}\label{lem1}
Let $\mathfrak{x} \subseteq \mathfrak{n}$ be a Lie subalgebra of codimension one.   
\begin{enumerate}[$($a$)$]
\item\label{lem1.1} If $\mathtt{l}_1 \in \mathfrak{x}$, then $\mathtt{l}_k, \mathtt{l}_{k+1}, 
\ldots \in \mathfrak{x}$ for some $k\in\mathbb{N}$.
\item\label{lem1.2} If $\mathtt{l}_s \in \mathfrak{x}$ for some $s\in\mathbb{N}$, then 
$\mathtt{l}_{is}, \mathtt{l}_{(i+1)s}, \mathtt{l}_{(i+2)s},  \ldots \in \mathfrak{x}$ for some $i\in\mathbb{N}$.
\item\label{lem1.3} If $\mathtt{l}_{is},\mathtt{l}_{(i+1)s}, \mathtt{l}_{(i+2)s},  \ldots \in \mathfrak{x}$ 
for some $s,i\in\mathbb{N}$, then  $\mathtt{l}_k, \mathtt{l}_{k+1}, \ldots \in \mathfrak{x}$ for some $k$.
\end{enumerate}
\end{lemma}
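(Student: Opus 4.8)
The plan is to prove the three parts more or less independently, using the Lie bracket $[\mathtt{l}_i,\mathtt{l}_j]=(j-i)\mathtt{l}_{i+j}$ (the central element $\mathtt{c}$ plays no role in $\mathfrak{n}$) together with the codimension-one hypothesis, which forces a certain ``saturation'' behavior. For part~\eqref{lem1.1}, suppose $\mathtt{l}_1\in\mathfrak{x}$. Since $\mathfrak{x}$ has codimension one, for each $i$ at most one of the infinitely many elements $\mathtt{l}_i,\mathtt{l}_{i+1},\dots$ can fail to lie in $\mathfrak{x}$; more usefully, there must be arbitrarily large $k$ with $\mathtt{l}_k\in\mathfrak{x}$. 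First I would argue that once $\mathtt{l}_k\in\mathfrak{x}$ for some $k\ge 2$, bracketing with $\mathtt{l}_1$ repeatedly gives $[\mathtt{l}_1,\mathtt{l}_k]=(k-1)\mathtt{l}_{k+1}\in\mathfrak{x}$, and inductively $\mathtt{l}_k,\mathtt{l}_{k+1},\mathtt{l}_{k+2},\dots\in\mathfrak{x}$. So the only obstacle is producing one such $k\ge 2$, and that follows because $\mathfrak{x}$ cannot omit $\mathtt{l}_2,\mathtt{l}_3,\dots$ all at once (that would be codimension $\ge 2$, or rather would contradict codimension one since the span of $\{\mathtt{l}_2,\mathtt{l}_3,\dots\}$ already has codimension one inside $\mathfrak{n}$ and $\mathtt{l}_1\notin$ that span).

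For part~\eqref{lem1.2}, assume $\mathtt{l}_s\in\mathfrak{x}$. Here the natural object is the subalgebra $\mathfrak{n}^{(s)}=\bigoplus_{j\ge 1}\mathbb{C}\mathtt{l}_{js}$, which is isomorphic (up to rescaling the bracket) to $\mathfrak{n}$ itself via $\mathtt{l}_j\mapsto \mathtt{l}_{js}$. The intersection $\mathfrak{x}\cap\mathfrak{n}^{(s)}$ is a subalgebra of $\mathfrak{n}^{(s)}$, and I would check it has codimension $\le 1$ there (since $\mathfrak{x}$ has codimension one in $\mathfrak{n}$) and contains $\mathtt{l}_s$; then part~\eqref{lem1.1} applied inside $\mathfrak{n}^{(s)}$ yields $\mathtt{l}_{is},\mathtt{l}_{(i+1)s},\dots\in\mathfrak{x}$ for some $i$. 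One has to handle the degenerate case where $\mathfrak{x}\cap\mathfrak{n}^{(s)}$ is all of $\mathfrak{n}^{(s)}$, but that is immediate (take $i=1$).

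For part~\eqref{lem1.3}, assume $\mathtt{l}_{is},\mathtt{l}_{(i+1)s},\dots\in\mathfrak{x}$. Pick $N$ large; I claim every $\mathtt{l}_n$ with $n\ge N$ lies in $\mathfrak{x}$. The idea is that any sufficiently large $n$ can be written as $n=a s+b s$ with $a,b\ge i$ and $a\ne b$ provided $n$ is a multiple of $s$ — giving $\mathtt{l}_n\in\mathfrak{x}$ directly by bracketing $\mathtt{l}_{as}$ with $\mathtt{l}_{bs}$ — but for $n$ not a multiple of $s$ this does not work, so instead I would bracket $\mathtt{l}_{js}\in\mathfrak{x}$ (for suitable large $j$) against whatever elements of $\mathfrak{x}$ are available in a fixed window. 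Concretely: since $\mathfrak{x}$ has codimension one, in any block of $s$ consecutive indices $\{m+1,\dots,m+s\}$ with $m$ large, at least $s-1$ of the $\mathtt{l}$'s lie in $\mathfrak{x}$; choosing $m$ large and some $\mathtt{l}_t\in\mathfrak{x}$ with $t$ in such a block and $t$ coprime-in-residue to $0\bmod s$, the elements $[\mathtt{l}_{js},\mathtt{l}_t]=(t-js)\mathtt{l}_{t+js}$ sweep out an entire residue class mod $s$ for all large indices, and letting $t$ range over a full set of residues (available in finitely many blocks) covers all residues, hence all large $\mathtt{l}_n$. The main obstacle, and the step needing the most care, is exactly this last covering argument: one must ensure that a complete set of residues modulo $s$ is represented by elements $\mathtt{l}_t\in\mathfrak{x}$ with $t$ large enough that $t$ plus a multiple of $is$ is controlled, and that the ``at most one missing element'' budget of codimension one is not exceeded while doing so. Once \eqref{lem1.2} and \eqref{lem1.3} are combined with \eqref{lem1.1}, the lemma follows; I expect \eqref{lem1.3} to be the technical heart.
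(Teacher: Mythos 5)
Your reduction of parts \eqref{lem1.1} and \eqref{lem1.3} rests on a false linear-algebra principle, namely that a codimension-one subspace of $\mathfrak{n}$ can omit at most one (or at most ``a budget'' of) the basis vectors $\mathtt{l}_k$. This is not true: the hyperplane $\{\sum_k c_k\mathtt{l}_k \mid \sum_k c_k=0\}$ has codimension one and contains no $\mathtt{l}_k$ at all. So in \eqref{lem1.1} your claim that ``there must be arbitrarily large $k$ with $\mathtt{l}_k\in\mathfrak{x}$'' (equivalently, that $\mathfrak{x}$ cannot omit all of $\mathtt{l}_2,\mathtt{l}_3,\dots$) does not follow from codimension one, and your stated justification --- that the span of $\{\mathtt{l}_2,\mathtt{l}_3,\dots\}$ has codimension one and misses $\mathtt{l}_1$ --- proves nothing about $\mathfrak{x}$, which need not contain or equal that span. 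Likewise in \eqref{lem1.3}, the assertion that in every block of $s$ consecutive indices at least $s-1$ of the $\mathtt{l}_t$ lie in $\mathfrak{x}$ is unfounded, and the whole covering argument collapses with it. The easy halves of \eqref{lem1.1} and \eqref{lem1.3} (once a single $\mathtt{l}_k$ with $k\ge 2$, resp.\ one $\mathtt{l}_t$ in each residue class mod $s$, is known to lie in $\mathfrak{x}$, bracket repeatedly with $\mathtt{l}_1$, resp.\ with the $\mathtt{l}_{js}$) are correct and agree with the paper; what is missing is precisely the hard case.

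That hard case --- no $\mathtt{l}_k$ ($k\ge2$) in $\mathfrak{x}$ for \eqref{lem1.1}, no $\mathtt{l}_{p+js}$ for some fixed residue $p$ for \eqref{lem1.3} --- is where the Lie structure must be used. Codimension one then forces elements of the form $\mathtt{l}_k-a_k\mathtt{l}_{k+1}\in\mathfrak{x}$ with all $a_k\neq 0$, and one derives a contradiction from closure under the bracket: in \eqref{lem1.1} the paper combines the recursion $a_{k+1}=\tfrac{k}{k-1}a_k$ (from bracketing with $\mathtt{l}_1$) with the explicit bracket $[\mathtt{l}_2-a_2\mathtt{l}_3,\mathtt{l}_3-a_3\mathtt{l}_4]$, which produces a vector forcing $\mathtt{l}_7\in\mathfrak{x}$; in \eqref{lem1.3} it compares the two recursions obtained by bracketing with $\mathtt{l}_{is}$ and with $\mathtt{l}_{2is}$ and shows they are incompatible. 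Nothing in your proposal substitutes for these computations, so the proposal as written does not prove the lemma. Your part \eqref{lem1.2} is fine and is essentially the paper's argument (transport along $\mathtt{l}_j\mapsto\mathtt{l}_{js}$ and apply part \eqref{lem1.1}).
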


\begin{proof}
If $\mathtt{l}_k \in \mathfrak{a}$ for some $k>1$, then claim \eqref{lem1.1} follows immediately. 
Therefore, assume that this is not the case and in particular that claim \eqref{lem1.1} is false.  Since $\mathfrak{x}$ 
has codimension $1$ and $\mathtt{l}_k \not\in \mathfrak{x}$ for any $k>1$, we must have that 
$\mathtt{l}_k-a_k\mathtt{l}_{k+1} \in \mathfrak{x}$ for all $k\geq 2$ and some $0 \neq a_k \in \mathbb{C}$.  
Also, $[\mathtt{l}_1, \mathtt{l}_k-a_k\mathtt{l}_{k+1}] \in \mathfrak{x}$, which implies $a_{k+1}=\frac{k}{k-1}a_{k}=ka_2$.  On the other hand, 
\begin{eqnarray*}
[\mathtt{l}_2-a_2\mathtt{l}_3,\mathtt{l}_3-a_3\mathtt{l}_4]&=&\mathtt{l}_5-2a_3\mathtt{l}_6+a_2a_3\mathtt{l}_7\\
&=&\mathtt{l}_5-4a_2\mathtt{l}_6+2a_2^2\mathtt{l}_7.
\end{eqnarray*}
Since $\mathtt{l}_5-a_5\mathtt{l}_6=\mathtt{l}_5-4a_2\mathtt{l}_6$, this implies that 
$\mathtt{l}_7 \in \mathfrak{x}$, a contradiction. This proves claim \eqref{lem1.1}. Claim \eqref{lem1.2} follows
from claim \eqref{lem1.1} as the span of all $\mathtt{l}_{js}$, $j\in\mathbb{N}$, is isomorphic 
to $\mathfrak{n}$ as a Lie algebra.

To prove claim \eqref{lem1.3} we show that for any $1\leq p<s$ there is $j\in\mathbb{Z}_+$ such that 
$\mathtt{l}_{p+js}\in \mathfrak{x}$. Assume that this is not the case. Then 
$\mathtt{l}_{p+js}-a_j\mathtt{l}_{p+(j+1)s} \in \mathfrak{x}$ for all $j\in\mathbb{Z}_+$ and some 
$0 \neq a_j \in \mathbb{C}$.  Also, $[\mathtt{l}_{is}, \mathtt{l}_{p+js}-a_j\mathtt{l}_{p+(j+1)s}] 
\in \mathfrak{x}$, which implies $a_{i+j}=\frac{p+(j+1-i)s}{p+(j-i)s}a_j$.  Hence
$a_{2i+j}=\frac{p+(j+1-i)s}{p+(j-i)s}\frac{p+(j+1)s}{p+js}a_j$. 

On the other hand, $[\mathtt{l}_{2is}, \mathtt{l}_{p+js}-a_j\mathtt{l}_{p+(j+1)s}] 
\in \mathfrak{x}$, which implies $a_{2i+j}=\frac{p+(j+1-2i)s}{p+(j-2i)s}a_j$. This gives
\begin{displaymath}
\frac{p+(j+1-i)s}{p+(j-i)s}\cdot \frac{p+(j+1)s}{p+js}= \frac{p+(j+1-2i)s}{p+(j-2i)s}.
\end{displaymath}
for all $j\in\mathbb{Z}_+$. This means that the following multisets
(the negatives of the roots with respect to the variable $js$) coincide:
\begin{displaymath}
\{(1-i)s+p,p+s,p-2is\}=\{p-is,p,p+(1-2i)s\} 
\end{displaymath}
and thus
\begin{displaymath}
\{(1-i)s,s,-2is\}=\{-is,0,(1-2i)s\}.
\end{displaymath}
Since both $i,s\in\mathbb{N}$, the only way to have a zero on the left hand side is to have $i=1$, which implies
\begin{displaymath}
\{0,s,-2s\}=\{-s,0,-s\}.
\end{displaymath}
Now the left hand side contains a positive integer, while the right hand side does not. 
This is a contradiction which proves claim \eqref{lem1.3}.
\end{proof}

\begin{proposition}\label{prop2}
Suppose $\mathfrak{x} \subseteq \mathfrak{n}$ is a subalgebra of codimension one and there is no $k$ such that 
$\mathtt{l}_k, \mathtt{l}_{k+1}, \ldots \in \mathfrak{x}$. Then $\mathfrak{x}$ has a basis of the form
$\{ \mathtt{l}_k-z^{k-1} \mathtt{l}_1\mid k \geq 2 \}$ for some nonzero $z \in\mathbb{C}$.
\end{proposition}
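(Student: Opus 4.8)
My strategy is to exploit the trichotomy provided by Lemma~\ref{lem1}. The hypothesis says $\mathfrak{x}$ contains no ``cofinite tail'' $\{\mathtt{l}_k,\mathtt{l}_{k+1},\ldots\}$; by the contrapositive of parts \eqref{lem1.1}, \eqref{lem1.2}, \eqref{lem1.3} of that lemma, this forces $\mathfrak{x}$ to contain no $\mathtt{l}_s$ whatsoever for $s\in\mathbb{N}$ (if it contained some $\mathtt{l}_s$, then part \eqref{lem1.2} would give a tail in $s\mathbb{N}$, and part \eqref{lem1.3} would then upgrade this to a genuine tail, contradicting the hypothesis; and $s=1$ is excluded by part \eqref{lem1.1}). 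So the first step is to record that $\mathtt{l}_s\notin\mathfrak{x}$ for every $s\geq 1$.

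Next, since $\mathfrak{x}$ has codimension one and misses every $\mathtt{l}_s$, I can pick a linear complement. The natural choice is to take $\mathtt{l}_1$ as a complementary vector: then $\mathfrak{n}=\mathfrak{x}\oplus\mathbb{C}\mathtt{l}_1$, and for each $k\geq 2$ there are unique scalars $c_k\in\mathbb{C}$ with $\mathtt{l}_k-c_k\mathtt{l}_1\in\mathfrak{x}$; moreover $c_k\neq 0$ for all $k$ since otherwise $\mathtt{l}_k\in\mathfrak{x}$. These elements $\{\mathtt{l}_k-c_k\mathtt{l}_1: k\geq 2\}$ are automatically a basis of $\mathfrak{x}$ (they are linearly independent and there are the right number of them). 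It remains to show $c_k=z^{k-1}$ for a single nonzero constant $z$, i.e. that $c_k=c_2^{\,k-1}$ after setting $z:=c_2$ — note $c_2\neq 0$ so this $z$ is nonzero.

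The engine for the recursion is the bracket relation. Computing $[\mathtt{l}_j,\mathtt{l}_k-c_k\mathtt{l}_1]$ for suitable small $j$ and reducing modulo $\mathbb{C}\mathtt{l}_1$ (using that the result lies in $\mathfrak{x}$, hence its $\mathtt{l}_1$-coefficient is governed by the $c$'s) will give recurrences among the $c_k$. Concretely, $[\mathtt{l}_1,\mathtt{l}_k-c_k\mathtt{l}_1]=(k-1)\mathtt{l}_{k+1}$, which only says $\mathtt{l}_{k+1}\notin\mathfrak{x}$ — not yet useful. The useful relations come from $[\mathtt{l}_2-c_2\mathtt{l}_1,\,\mathtt{l}_k-c_k\mathtt{l}_1]$: expanding, this equals $(k-2)\mathtt{l}_{k+2}-c_k\mathtt{l}_3-(k-1)c_2\mathtt{l}_{k+1}+c_2c_k\cdot 2\mathtt{l}_2$ (signs/coefficients to be pinned down from the bracket), and since this must lie in $\mathfrak{x}$, rewriting each $\mathtt{l}_m$ as $(\mathtt{l}_m-c_m\mathtt{l}_1)+c_m\mathtt{l}_1$ shows that the total $\mathtt{l}_1$-coefficient vanishes, yielding a polynomial identity in $c_2,c_3,c_k,c_{k+1},c_{k+2}$. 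Combined with the analogous identity from $[\mathtt{l}_3-c_3\mathtt{l}_1,\mathtt{l}_k-c_k\mathtt{l}_1]$, one solves for $c_{k+1}$ and $c_{k+2}$ in terms of lower-index $c$'s, and an induction on $k$ then forces $c_k=c_2^{k-1}$ (one checks the base cases $c_3=c_2^2$, $c_4=c_2^3$ directly, perhaps needing $c_5$ as well, exactly as in the proof of Lemma~\ref{lem1}\eqref{lem1.1} where $\mathtt{l}_5-4a_2\mathtt{l}_6+2a_2^2\mathtt{l}_7$ appeared).

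The main obstacle is purely bookkeeping: setting up enough independent bracket relations to pin down the $c_k$ uniquely and verifying the base cases of the induction, while being careful that the relations are genuinely consistent (the ``multiset of roots'' argument in Lemma~\ref{lem1} shows how such consistency checks can fail, so here they must succeed and produce exactly the geometric solution). I do not expect any conceptual difficulty beyond that; once $c_3=c_2^2$ and $c_4=c_2^3$ are in hand, the inductive step propagates the pattern and the proof concludes by setting $z=c_2$.
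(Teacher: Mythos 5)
Your overall route is the same as the paper's: use Lemma~\ref{lem1} to conclude $\mathtt{l}_s\notin\mathfrak{x}$ for all $s$, write a basis $\{\mathtt{l}_k-c_k\mathtt{l}_1\}$ with all $c_k\neq 0$, extract polynomial relations among the $c_k$ from the requirement that brackets of basis elements land back in $\mathfrak{x}$, and conclude $c_k=c_2^{k-1}$. However, there is a genuine gap exactly where the real work of the proposition lies. The relations coming from $[\mathtt{l}_2-c_2\mathtt{l}_1,\mathtt{l}_j-c_j\mathtt{l}_1]$, namely $(j-2)c_{j+2}-(j-1)c_2c_{j+1}+c_3c_j=0$, recursively determine $c_5,c_6,\dots$ from $c_2,c_3,c_4$ but say nothing about $c_3$ and $c_4$ themselves; the remaining relations $D_{i,j}=0$ with $i\geq 3$ become, after substitution, polynomial constraints on $(c_2,c_3,c_4)$ alone, and one must show that together with $c_3,c_4\neq 0$ they force $c_3=c_2^2$ and $c_4=c_2^3$. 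You dismiss this as ``base cases checked directly, perhaps needing $c_5$ as well,'' but this is the crux: the paper resolves it by computing Gr\"obner bases of the ideal generated by nine of the relations $D_{i,j}$ (in $\mathbb{C}[c_2,\dots,c_9]$, with two different monomial orders) to extract the elements $c_3^6-c_3^5c_2^2$ and $c_4^3c_2-c_4^2c_3^2$. Until you actually carry out this elimination (by hand or otherwise), the proposition is not proved; nothing in your outline rules out, say, a second branch of solutions for $(c_3,c_4)$ given $c_2$.

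A secondary inaccuracy: your claim that pairing the $\hat{\mathtt{l}}_2$-relations with the $\hat{\mathtt{l}}_3$-relations lets you ``solve for $c_{k+1}$ and $c_{k+2}$'' inductively is not how the system is structured. The relation from $[\mathtt{l}_3-c_3\mathtt{l}_1,\mathtt{l}_k-c_k\mathtt{l}_1]$ involves $c_{k+3}$ and $c_{k+1}$ (not $c_{k+2}$), so the two families do not combine into a clean two-unknown linear solve at each step; once $c_3=c_2^2$ and $c_4=c_2^3$ are established, the induction is carried entirely by the $i=2$ recursion (plus the easy verification that $c_k=c_2^{k-1}$ satisfies all the relations $D_{i,j}=0$). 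Fixing the base case is therefore not optional bookkeeping but the substance of the argument.
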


\begin{proof}
By Lemma~\ref{lem1} we have $\mathtt{l}_s \not\in\mathfrak{x}$ for any $s\in \mathbb{N}$. 
As $\mathfrak{x}$ has codimension one in $\mathfrak{n}$, it must have a basis of the form 
$\{\mathtt{l}_k-a_k\mathtt{l}_1 \mid  k \geq 2\}$ for some nonzero $a_k \in \mathbb{C}$.

For $2\leq i<j$ consider 
\begin{multline*}
[\mathtt{l}_i+a_i\mathtt{l}_1, \mathtt{l}_j+a_j\mathtt{l}_1]=\\=
(j-i)\mathtt{l}_{i+j}-(j-1)a_i\mathtt{l}_{j+1}+(i-1)a_j\mathtt{l}_{i+1}=\\
=(j-i)\big(\mathtt{l}_{i+j}-a_{i+j}\mathtt{l}_1\big)
-(j-1)a_i\big(\mathtt{l}_{j+1}-a_{j+1}\mathtt{l}_1\big)
+\\+(i-1)a_j\big(\mathtt{l}_{i+1}-a_{i+1}\mathtt{l}_1\big)+\\+
\big((j-i)a_{i+j}-(j-1)a_ia_{j+1}+(i-1)a_{i+1}a_j\big)\mathtt{l}_1.
\end{multline*}
The condition $[\mathtt{l}_i+a_i\mathtt{l}_1, \mathtt{l}_j+a_j\mathtt{l}_1]\in \mathfrak{x}$ implies
\begin{equation}\label{eq123}
D_{i,j}:=(j-i)a_{i+j}-(j-1)a_ia_{j+1}+(i-1)a_{i+1}a_j=0. 
\end{equation}
Taking $i=2$ and $j=3,4,\dots$ we get a recursive formula which uniquely determines 
$a_5,a_6,\dots$ in terms of $a_2,a_3,a_4$.

Let $I$ be the ideal in $\mathbb{C}[a_2,a_3,a_4,a_5,a_6,a_7,a_8,a_9]$ generated by 
$D_{2,3}$, $D_{2,4}$, $D_{2,5}$, $D_{2,6}$, $D_{2,7}$, $D_{3,4}$, $D_{3,5}$, $D_{3,6}$ and  $D_{4,5}$.
Computing the Gr{\"o}bner basis of $I$ with respect to the lexicographic order for which
\begin{displaymath}
a_9>a_8>a_7>a_6>a_5>a_4>a_3>a_2, 
\end{displaymath}
we get that  $I$ contains $a_3^6-a_3^5a_2^2$, which implies $a_3=a_2^2$ since $a_3\neq 0$.
Computing the Gr{\"o}bner basis of $I$ with respect to the lexicographic order for which
\begin{displaymath}
a_9>a_8>a_7>a_6>a_5>a_2>a_3>a_4, 
\end{displaymath}
we get that  $I$ contains $a_4^3a_2-a_4^2a_3^2$. Using $a_3=a_2^2$ and $a_4,a_2\neq 0$, 
we get $a_4=a_2^3$. This means that all $a_k$ are uniquely determined by the value of $a_2$.

At the same time, it is easy to check that $a_k=a_2^{k-1}$ satisfies \eqref{eq123} and hence defines a subalgebra.
\end{proof}

For a nonzero $z\in\mathbb{C}$ we denote by $\mathfrak{a}_z$ the subalgebra constructed in Proposition ~\ref{prop2}.
This one-parameter family of subalgebras exhausts all codimension one subalgebras of  $\mathfrak{n}$
which do not contain $\mathtt{l}_k, \mathtt{l}_{k+1}, \ldots$ for some $k$. Modules
induced from subalgebras of $\mathfrak{n}$ containing $\mathtt{l}_k, \mathtt{l}_{k+1}, \ldots $ 
for some $k$ were studied in \cite{MZ2}. All such modules fit into the general Whittaker setup for $\mathfrak{V}$
defined in \cite{BM}. Simple modules which are induced from simple $1$-dimensional $\mathfrak{a}_z$-modules do 
not fit into this general Whittaker setup. They are the objects of our study in the present paper.

\section{Induction to $\mathfrak{n}$}\label{s3}

For $\mathbf{m}:=(m_1,m_2,m_3)\in\mathbb{C}^3$ define 
$V_{\mathbf{m}}= \mbox{Ind}_{\mathfrak{a}_z}^{\mathfrak{n}}(\mathbb{C}_{\mathbf{m}})$.  
Since $\mathfrak{n}$ has a basis $\mathtt{l}_1, \mathtt{l}_2-z\mathtt{l}_1, \mathtt{l}_3-z^2\mathtt{l}_1, \ldots$, 
the PBW Theorem implies that $V_{\mathbf{m}}$ has a basis $\{\mathtt{l}_1^k \otimes 1 \mid k \geq 0\}$.

\begin{proposition}\label{prop21}
The $\mathfrak{n}$-module $V_{\mathbf{m}}$ is simple if and only if $am_3 \neq m_4$.
\end{proposition}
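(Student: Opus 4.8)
The plan is to work through the $\mathbb{Z}_{\geq0}$-filtration $V^{\leq d}:=\mathrm{span}\{v_0,\dots,v_d\}$ of $V_{\mathbf m}$, where $v_k:=\mathtt l_1^k\otimes 1$. First I would use the identity $\mathtt l_j\,v_{k+1}=\mathtt l_1\mathtt l_j\,v_k-(j-1)\mathtt l_{j+1}v_k$ to compute the $\mathfrak n$-action on this basis by induction on $k$; the relevant consequences are that $\mathtt l_1$ raises degree by exactly one, while each $X_j:=\mathtt l_j-z^{j-1}\mathtt l_1\in\mathfrak a_z$ preserves the filtration and acts on $V^{\leq d}/V^{\leq d-1}$ by the scalar $\lambda_{j,d}:=m_j-d(j-1)z^j$, and moreover $X_j\,v_d\equiv\lambda_{j,d}v_d+\gamma_{j,d}v_{d-1}\pmod{V^{\leq d-2}}$ with $\gamma_{j,d}=-(j-1)d\,m_{j+1}+(j-1)j\,z^{j+1}\binom d2$ (here $m_j$ for $j>4$ is given by the formula in the statement, and one checks that then $(\lambda_{2,d},\lambda_{3,d},\lambda_{4,d})$ again satisfies that formula, i.e.\ defines an $\mathfrak a_z$-character).

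Next I would reduce the assertion to one about eigenvectors. If $0\neq W\subseteq V_{\mathbf m}$ is a submodule and $0\neq w\in W$ has minimal degree $d$, then minimality forces $W\cap V^{\leq d}=\mathbb C w$; since $\mathfrak a_z$ preserves each $V^{\leq d}$, the vector $w$ is a simultaneous eigenvector for all of $\mathfrak a_z$, with $X_j w=\lambda_{j,d}w$. If $d=0$ then $w\in\mathbb C v_0$, so $v_k=\mathtt l_1^k v_0\in W$ and $W=V_{\mathbf m}$. If $d\geq 1$, a short induction shows $U(\mathfrak n)w=\mathbb C[\mathtt l_1]w=\mathrm{span}\{\mathtt l_1^kw:k\geq 0\}$, and since $\mathtt l_1^kw$ has degree $d+k$ this is a nonzero submodule containing no element of degree $<d$, hence a proper one. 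Thus $V_{\mathbf m}$ is simple if and only if it has no common $\mathfrak a_z$-eigenvector of positive degree.

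Finally I would determine when such an eigenvector exists. Put $Z_3:=(\mathtt l_3-z^2\mathtt l_1)-2z(\mathtt l_2-z\mathtt l_1)\in\mathfrak a_z$; it acts on every $V^{\leq e}/V^{\leq e-1}$ by the same scalar $\mu_3:=m_3-2zm_2$, so $Z_3-\mu_3$ strictly lowers degree, inducing on $V^{\leq d}/V^{\leq d-1}\to V^{\leq d-1}/V^{\leq d-2}$ multiplication by $\nu_{3,d}:=\gamma_{3,d}-2z\gamma_{2,d}=d\bigl(2(zm_3-m_4)+z^4(d-1)\bigr)$. A common $\mathfrak a_z$-eigenvector $w$ of degree $d\geq 1$ satisfies $(Z_3-\mu_3)w=0$, which forces $\nu_{3,d}=0$ and hence $d=d_0:=1+2(m_4-zm_3)/z^4$. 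If $d_0=1$ this reads $m_4=zm_3$, and conversely when $m_4=zm_3$ one checks directly that $v_1+(m_3/z^2)v_0$ is killed by every $X_j-\lambda_{j,1}$, producing a proper submodule; this settles the ``only if'' direction and the case $d_0=1$ of the ``if'' direction. The remaining case is $d_0\geq 2$ an integer (which already entails $m_4\neq zm_3$); here I would carry the expansion one step further: determine the coefficients of $w=v_{d_0}+\beta v_{d_0-1}+\cdots$ from $X_2w=\lambda_{2,d_0}w$, which fixes them uniquely, and substitute into $X_3w=\lambda_{3,d_0}w$; the vanishing of the $v_{d_0-2}$-coefficient yields a second constraint which, together with $\nu_{3,d_0}=0$, collapses to a nonzero multiple of a power of $z$ being zero (for $d_0=2$ it becomes $\tfrac32 z^8=0$), contradicting $z\neq 0$. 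Hence no positive-degree common $\mathfrak a_z$-eigenvector exists when $m_4\neq zm_3$, so $V_{\mathbf m}$ is simple. I expect the elimination of the cases $d_0\geq 2$ to be the one genuinely computational point; everything else is bookkeeping with the degree filtration.
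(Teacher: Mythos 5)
Your proof is correct in outline but takes a genuinely different route from the paper's for the ``if'' direction. The paper never touches common eigenvectors: it observes that $\mathtt{l}_1$ acts freely, so any proper quotient of $V_{\mathbf{m}}$ is finite dimensional; it then invokes the structural fact (from [MZ2]) that every finite-codimension ideal of $\mathfrak{n}$ contains a tail $\mathtt{l}_k,\mathtt{l}_{k+1},\dots$, so the simple quotient is one-dimensional and killed by $[\mathfrak{n},\mathfrak{n}]\ni \mathtt{l}_3,\mathtt{l}_4$, which forces $zm_3=m_4$ at once. Your replacement --- reducing simplicity to the nonexistence of a common $\mathfrak{a}_z$-eigenvector of positive degree via the $\mathtt{l}_1$-filtration, and then eliminating degree $d$ through the $v_{d-1}$- and $v_{d-2}$-compatibility constraints --- is elementary and self-contained (and your filtration data $\lambda_{j,d}$, $\gamma_{j,d}$, the reduction to eigenvectors, and the eigenvector $v_1+(m_3/z^2)v_0$ when $m_4=zm_3$ all check out), but it buys this at the cost of a computation the paper's argument avoids entirely. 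The one point you assert rather than prove is the decisive one: that for an integer $d_0\ge 2$ the second constraint contradicts $\nu_{3,d_0}=0$. I verified that it does: writing $\hat{\mathtt{l}}_j v_d = \lambda_{j,d}v_d+\gamma_{j,d}v_{d-1}+C_{j,d}v_{d-2}+\cdots$ with $C_{j,d}=j(j-1)\bigl(\binom{d}{2}m_{j+2}-(j+1)z^{j+2}\binom{d}{3}\bigr)$, the $v_{d-2}$-compatibility between $\hat{\mathtt{l}}_2$ and $\hat{\mathtt{l}}_3$, after substituting $m_4-zm_3=z^4(d-1)/2$ and $m_5=-z^2m_3+2zm_4$, reduces to $z^5\,d(d-1)(d+1)=0$, impossible for $z\neq 0$ and $d\ge 2$. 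So your plan closes, but this step must actually be carried out in the write-up (note also that your quoted obstruction $\tfrac32 z^8$ for $d_0=2$ does not match the nonzero multiple of $z^5$ the computation produces, so recheck your normalization there).
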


\begin{proof}
Suppose $0 \neq M \subseteq V_ {\mathbf{m}}$.  Since $\mathtt{l}_1$ acts freely on $M$, it follows that 
$V_ {\mathbf{m}}/M$ is finite-dimensional. In particular, we may assume that $M$ is chosen so that 
$V_ {\mathbf{m}}/M$ is simple.

Let $I \subseteq \mathfrak{n}$ be the annihilator of this quotient module. In \cite[Subsection~3.3]{MZ2}
it is shown that any ideal in $\mathfrak{n}$ of finite codimension contains $\mathtt{l}_k, \mathtt{l}_{k+1}, \ldots$
for some $k>0$. This implies that $\mathfrak{n}/I$ is nilpotent,   $V_ {\mathbf{m}}/M$ is one-dimensional, 
and $[\mathfrak{n},\mathfrak{n}] \subseteq I$. Therefore, $\mathtt{l}_3$ and $\mathtt{l}_4$ act on  
$V_ {\mathbf{m}}/M$ as zero and thus $z(\mathtt{l}_3-z^2\mathtt{l}_1)$ and $\mathtt{l}_4-z^3\mathtt{l}_1$  act equally,
which yields $zm_3=m_4$.

Assume now that $zm_3=m_4$ and consider the $\mathfrak{n}$-module $\mathbb{C}_{m_2,m_3}$ on which $\mathtt{l}_1$ 
acts via $-\frac{1}{z^2}m_3$ and $\mathtt{l}_2$ acts via $m_2-\frac{1}{z}m_3$. 
Computing its restriction to $\mathfrak{a}_z$ one gets that 
the restriction is isomorphic to $\mathbb{C}_{\mathbf{m}}$. From the universal property of induced modules it 
follows that $V_{\mathbf{m}}$ surjects onto this $\mathfrak{n}$-mo\-dule $\mathbb{C}_{m_2,m_3}$ and hence is reducible.
\end{proof}

From the above proof it follows that in the case $zm_3=m_4$ the module $V_{\mathbf{m}}$ has a unique simple
top isomorphic to the $\mathfrak{n}$-module $\mathbb{C}_{m_2,m_3}$ on which $\mathtt{l}_1$ acts via 
$-\frac{1}{z^2}m_3$ and $\mathtt{l}_2$ acts via $m_2-\frac{1}{z}m_3$. The induced module 
$\mathrm{Ind}_{z,\theta}(\mathbb{C}_{m_2,m_3})$ is completely described in \cite{OW,MZ2}.

For $k>4$ set 
\begin{equation}\label{eq2}
m_k = -(k-4)m_3z^{i-3}+(k-3)m_4z^{i-4}. 
\end{equation}
To simplify our notation, for $k\geq 2$ we will denote by $\hat{\mathtt{l}}_k$ the element
$\mathtt{l}_k-z^{k-1}\mathtt{l}_1\in\mathfrak{a}_z$.
We will need the following property of $V_{\mathbf{m}}$:

\begin{lemma}\label{lemnn1}
Let $k\geq 2$. Then the module $V_{\mathbf{m}}$ has a basis $\{v_n^{(k)}\vert n\in\mathbb{N}_0\}$
such that $\hat{\mathtt{l}}_k\cdot v_n^{(k)}=(m_k+nz^{k}(1-k))v_n^{(k)}$.
\end{lemma}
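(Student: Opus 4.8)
The plan is to work inside $V_{\mathbf{m}}$ using its PBW basis $\{\mathtt{l}_1^n\otimes 1\mid n\geq 0\}$ and to understand how $\hat{\mathtt{l}}_k$ acts on this basis. First I would compute $\hat{\mathtt{l}}_k\cdot(\mathtt{l}_1^n\otimes 1)$ by moving $\hat{\mathtt{l}}_k$ past the powers of $\mathtt{l}_1$, using the relation
\begin{displaymath}
[\mathtt{l}_1,\hat{\mathtt{l}}_k]=[\mathtt{l}_1,\mathtt{l}_k]-z^{k-1}[\mathtt{l}_1,\mathtt{l}_1]
=(k-1)\mathtt{l}_{k+1}=(k-1)\big(\hat{\mathtt{l}}_{k+1}+z^{k}\mathtt{l}_1\big).
\end{displaymath}
Iterating, $\mathrm{ad}_{\mathtt{l}_1}^j(\hat{\mathtt{l}}_k)$ is a linear combination of $\hat{\mathtt{l}}_{k+j}$ and $\mathtt{l}_1^?$-terms; crucially the only term that is \emph{not} in the left ideal generated by $\mathfrak{a}_z$ (modulo lower powers of $\mathtt{l}_1$) comes from the repeated appearance of the $z^{k}\mathtt{l}_1$ summand. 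The upshot should be that, modulo $\sum_{j<n}\mathbb{C}\,\mathtt{l}_1^j\otimes 1$, one has $\hat{\mathtt{l}}_k\cdot(\mathtt{l}_1^n\otimes 1)\equiv (m_k+nz^{k}(1-k))\,\mathtt{l}_1^n\otimes 1$, using that $\hat{\mathtt{l}}_k\cdot 1=m_k$ and that the commutator contributes a factor $n\cdot(-(k-1)z^k)$ from choosing which copy of $\mathtt{l}_1$ to commute through. In other words, in the ordered basis $\{\mathtt{l}_1^n\otimes 1\}$ the operator $\hat{\mathtt{l}}_k$ is upper triangular (with respect to the degree filtration) with diagonal entries exactly $\lambda_n^{(k)}:=m_k+nz^{k}(1-k)$.

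Given this, the lemma is a statement of linear algebra: a locally finite (with respect to the exhaustive filtration $F_N=\bigoplus_{n\le N}\mathbb{C}\,\mathtt{l}_1^n\otimes 1$), triangular operator whose diagonal eigenvalues $\lambda_n^{(k)}$ are pairwise distinct is diagonalizable, and one can choose an eigenbasis $\{v_n^{(k)}\}$ with $v_n^{(k)}\in F_n$, $v_n^{(k)}\equiv \mathtt{l}_1^n\otimes 1 \pmod{F_{n-1}}$, and $\hat{\mathtt{l}}_k\cdot v_n^{(k)}=\lambda_n^{(k)} v_n^{(k)}$. The distinctness of the $\lambda_n^{(k)}$ is immediate: $\lambda_n^{(k)}-\lambda_{n'}^{(k)}=(n-n')z^k(1-k)$, which is nonzero for $n\ne n'$ since $z\ne 0$ and $k\geq 2$. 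The eigenvectors are then built recursively: having chosen $v_0^{(k)},\dots,v_{n-1}^{(k)}$, write $\hat{\mathtt{l}}_k\cdot(\mathtt{l}_1^n\otimes 1)=\lambda_n^{(k)}\,\mathtt{l}_1^n\otimes 1+w$ with $w\in F_{n-1}$, expand $w$ in the $v_j^{(k)}$ with $j<n$, and solve the triangular system $(\lambda_n^{(k)}-\lambda_j^{(k)})c_j=(\text{coefficient of }v_j^{(k)}\text{ in }w)$ to correct $\mathtt{l}_1^n\otimes 1$ into an eigenvector; the denominators are nonzero by distinctness. That $\{v_n^{(k)}\}$ is a basis follows because the change-of-basis matrix from $\{\mathtt{l}_1^n\otimes 1\}$ is unitriangular.

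The main obstacle is the first step: verifying cleanly that the off-diagonal part of $\hat{\mathtt{l}}_k$ really is strictly lower-triangular in the degree filtration, i.e. that commuting $\hat{\mathtt{l}}_k$ through $\mathtt{l}_1^n$ produces, besides the desired diagonal term, only elements of $U(\mathfrak{n})$ of lower $\mathtt{l}_1$-degree after reducing modulo $\mathfrak{a}_z$. This requires keeping track of the $\hat{\mathtt{l}}_{k+j}$ terms generated by $\mathrm{ad}_{\mathtt{l}_1}$: each such term, when it finally hits $1$, contributes $m_{k+j}$ times a power $\mathtt{l}_1^{n-j}\otimes 1$ of strictly smaller degree, hence lands in $F_{n-1}$, which is exactly what we need (and incidentally explains why the precise values $m_{k+j}$ in \eqref{eq2} are irrelevant for the statement). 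Once the triangular picture is established, the rest is the routine recursion above, and the conditions \eqref{conditions} play no role here — they will only be needed later, in Sections~\ref{s4} and \ref{s5}.
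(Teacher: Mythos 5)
Your proposal is correct and follows essentially the same route as the paper: establish by induction that $\hat{\mathtt{l}}_k$ preserves the filtration $V(n)=\langle v,\mathtt{l}_1v,\dots,\mathtt{l}_1^nv\rangle$ and acts on $\mathtt{l}_1^nv$ by $m_k+nz^k(1-k)$ modulo $V(n-1)$, using $[\hat{\mathtt{l}}_k,\mathtt{l}_1]=(1-k)(\hat{\mathtt{l}}_{k+1}+z^k\mathtt{l}_1)$, and then diagonalize the resulting triangular operator with distinct diagonal entries. The only difference is that you spell out the final linear-algebra step that the paper leaves implicit.
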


\begin{proof}
Let $v$ be the canonical generator of $V_{\mathbf{m}}$. For $n\in\mathbb{N}_0$ denote by  $V(n)$ the linear span 
of $v,\mathtt{l}_1v,\dots,\mathtt{l}_1^nv$. We claim that $V(n)$ is invariant under the action of 
$\hat{\mathtt{l}}_k$ and, moreover,
\begin{equation}\label{eq1}
\hat{\mathtt{l}}_k\cdot \mathtt{l}^n_1v=(m_k+nz^{k}(1-k))\mathtt{l}^n_1v \,\,\mathrm{ mod }\,\,V(n-1).
\end{equation}
We show this by induction on $n$. If $n=0$, the claim is clear. To show the induction step, we compute
(by moving $\hat{\mathtt{l}}_k$ through one $\mathtt{l}_1$ and using the inductive assumption in the 
first equality):
{\small
\begin{multline*}
(\mathtt{l}_k-z^{k-1}\mathtt{l}_1)\cdot \mathtt{l}^{n+1}_1v=\\=
(m_k+nz^{k}(1-k))\mathtt{l}^{n+1}_1v + 
[\mathtt{l}_k-z^{k-1}\mathtt{l}_1,\mathtt{l}_1]\mathtt{l}^{n}_1v\,\,\mathrm{ mod }\,\,V(n)=\\=
(m_k+(n+1)z^{k}(1-k))\mathtt{l}^{n+1}_1v +(1-k)(\mathtt{l}_{k+1}-
z^{k}\mathtt{l}_1)\mathtt{l}^{n}_1v\,\,\mathrm{ mod }\,\,V(n).
\end{multline*}
}

This implies \eqref{eq1} and the statement of the lemma follows.
\end{proof}

\section{Induction to the Borel}\label{s4}

Denote by $\mathfrak{b}$ the standard Borel subalgebra of $\mathfrak{V}$, that is the subalgebra generated by
$\mathfrak{n}$ and $\mathtt{l}_0$. Define $W_{\mathbf{m}}= \mbox{Ind}_{\mathfrak{n}}^{\mathfrak{b}}(V_{\mathbf{m}})$.

\begin{proposition} \label{prop:borel}
If $zm_3 \neq m_4$ and $\mathbf{m}\neq (m_2,2zm_2,3z^2m_2)$, then the $\mathfrak{b}$-mo\-dule 
$W_{\mathbf{m}}$ is simple.
\end{proposition}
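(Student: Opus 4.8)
The plan is to exploit the weight-space decomposition of $W_{\mathbf{m}}$ with respect to $\mathtt{l}_0$. Since $\mathfrak{b}=\mathfrak{n}\rtimes\mathbb{C}\mathtt{l}_0$ and $V_{\mathbf{m}}$ is an $\mathfrak{n}$-module, the PBW theorem gives $W_{\mathbf{m}}\cong\mathbb{C}[\mathtt{l}_0]\otimes V_{\mathbf{m}}$ as a vector space; but a cleaner basis comes from Lemma~\ref{lemnn1}: fixing, say, $k=2$, we have a basis $\{v_n^{(2)}\}$ of $V_{\mathbf{m}}$ on which $\hat{\mathtt{l}}_2$ acts diagonally with distinct eigenvalues $m_2-nz^2$ (distinctness uses $z\neq 0$). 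Thus $W_{\mathbf{m}}$ has basis $\{\mathtt{l}_0^j\otimes v_n^{(2)}\}$, and one computes that $\mathtt{l}_0$ acts so that $W_{\mathbf{m}}$ is $\mathbb{Z}_+$-graded by the degree in $\mathtt{l}_0$, with the key structural feature being that $\mathtt{l}_0$ acts \emph{locally finitely but not diagonalizably} — more precisely, $W_{\mathbf{m}}$ is a free $\mathbb{C}[\mathtt{l}_0]$-module of rank one over $V_{\mathbf{m}}$. I would first record this and note that $V_{\mathbf{m}}\subseteq W_{\mathbf{m}}$ is exactly the ``bottom'' piece killed by $\mathfrak{n}$-lowering in the appropriate sense.

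Now let $0\neq N\subseteq W_{\mathbf{m}}$ be a nonzero submodule. The first step is to show $N\cap V_{\mathbf{m}}\neq 0$: take $0\neq w\in N$ of minimal $\mathtt{l}_0$-degree $d$; if $d>0$, then since $[\mathtt{l}_0,\mathtt{l}_i]=i\mathtt{l}_i$, acting by $\mathtt{l}_i$ for suitable $i\geq 1$ and comparing leading terms, one lowers the degree — the point is that the induced action of $\mathtt{l}_i$ on the top graded component cannot annihilate a nonzero vector, because on $W_{\mathbf{m}}/\mathtt{l}_0 W_{\mathbf{m}}\cong V_{\mathbf{m}}$ the algebra $\mathfrak{n}$ acts as on $V_{\mathbf{m}}$, and $V_{\mathbf{m}}$ is simple by Proposition~\ref{prop21} (here is where $zm_3\neq m_4$ enters). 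So we reduce to $N\cap V_{\mathbf{m}}\neq 0$, and then by simplicity of $V_{\mathbf{m}}$ as an $\mathfrak{n}$-module we get $V_{\mathbf{m}}\subseteq N$. The final step is to show that $V_{\mathbf{m}}$ generates all of $W_{\mathbf{m}}$ as a $\mathfrak{b}$-module, equivalently that $\mathtt{l}_0$ acting on $V_{\mathbf{m}}$ does not preserve it modulo nothing — concretely, one must show $\mathtt{l}_0 v\notin V_{\mathbf{m}}$ for $v$ the canonical generator, or rather that repeatedly applying $\mathtt{l}_0$ escapes every finite-degree truncation; this is where the hypothesis $\mathbf{m}\neq(m_2,2zm_2,3z^2m_2)$ is used.

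Let me be more precise about that last point, since I expect it to be the main obstacle. The submodule $N\supseteq V_{\mathbf{m}}$ is a $\mathbb{C}[\mathtt{l}_0]$-submodule of $W_{\mathbf{m}}=\mathbb{C}[\mathtt{l}_0]\otimes V_{\mathbf{m}}$ containing $1\otimes V_{\mathbf{m}}$, so if $N\neq W_{\mathbf{m}}$ then $W_{\mathbf{m}}/N$ is a nonzero $\mathfrak{b}$-module on which $\mathtt{l}_0$ acts locally nilpotently (it is spanned by images of $\mathtt{l}_0^j\otimes v$) and on which $\mathfrak{n}$ acts trivially — because $\mathfrak{n}$ shifts $\mathtt{l}_0$-degree up but the quotient has bounded degree... actually the cleaner formulation: $W_{\mathbf{m}}/N$ would be a quotient on which $V_{\mathbf{m}}\mapsto 0$, hence a module generated over $\mathbb{C}[\mathtt{l}_0]$ by the image of $\mathtt{l}_0\otimes v$ and killed by $\mathfrak{n}$ only if that image is itself $\mathfrak{n}$-invariant modulo lower terms. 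Computing $\hat{\mathtt{l}}_k\cdot(\mathtt{l}_0\otimes v)=[\hat{\mathtt{l}}_k,\mathtt{l}_0]\otimes v + \mathtt{l}_0\otimes(\hat{\mathtt{l}}_k v) = -k\mathtt{l}_k\otimes v + k z^{k-1}\mathtt{l}_1\otimes v + m_k(\mathtt{l}_0\otimes v)+\dots$, and expressing $\mathtt{l}_k\otimes v$ and $\mathtt{l}_1\otimes v$ back in $V_{\mathbf{m}}$, one finds that the obstruction to $\mathtt{l}_0\otimes v$ spanning a $1$-dimensional quotient is governed by a finite set of polynomial identities in $m_2,m_3,m_4,z$; these collapse, exactly as in the proof of Proposition~\ref{prop21} but one degree higher, to the single condition $\mathbf{m}=(m_2,2zm_2,3z^2m_2)$. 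So the strategy is: assume $N\neq W_{\mathbf{m}}$, deduce that the image of $\mathtt{l}_0\otimes v$ in $W_{\mathbf{m}}/N$ is annihilated by all $\hat{\mathtt{l}}_k$, extract the resulting equations on $\mathbf{m}$ (a small Gröbner-basis computation analogous to Proposition~\ref{prop2}), and conclude that the excluded locus is precisely $\{(m_2,2zm_2,3z^2m_2)\}$, contradicting the hypothesis. The main obstacle is organizing this last computation cleanly — identifying the right ``candidate quotient'' and showing the two displayed exceptional conditions ($zm_3=m_4$ and $\mathbf{m}=(m_2,2zm_2,3z^2m_2)$) are the only obstructions to simplicity.
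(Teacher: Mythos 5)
Your overall skeleton --- filter $W_{\mathbf{m}}$ by the degree in $\mathtt{l}_0$, push a nonzero element of a submodule down to the bottom layer $V_{\mathbf{m}}$, and invoke simplicity of $V_{\mathbf{m}}$ --- is the same as the paper's, but the step that carries all the content is missing, and the hypothesis $\mathbf{m}\neq(m_2,2zm_2,3z^2m_2)$ is invoked in the wrong place. First, acting by $\mathtt{l}_i\in\mathfrak{n}$ does \emph{not} lower the $\mathtt{l}_0$-degree: $\mathtt{l}_i\mathtt{l}_0^d=(\mathtt{l}_0-i)^d\mathtt{l}_i$, so the top component of $\mathtt{l}_iw$ is $\mathtt{l}_0^d\,\mathtt{l}_iv_d$, which is never zero (e.g.\ $\mathtt{l}_1$ acts injectively on $V_{\mathbf{m}}$). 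Your stated reason (``the induced action of $\mathtt{l}_i$ on the top graded component cannot annihilate a nonzero vector'') is the opposite of what is needed: to drop the degree you must \emph{annihilate} the top component. The correct move is to use simplicity of $V_{\mathbf{m}}$ to normalize the top coefficient to $v$ and then apply $\hat{\mathtt{l}}_k-m_k\in U(\mathfrak{n})$; the top component dies, and the entire difficulty is to show that the resulting element of degree at most $d-1$ is \emph{nonzero} for some $k$. That is exactly where Lemma~\ref{lemnn1} and the hypothesis on $\mathbf{m}$ enter: the cross term $[\hat{\mathtt{l}}_k,\mathtt{l}_0^d]v$ contributes (up to a nonzero scalar) $km_kv+(k-1)z^{k-1}\mathtt{l}_1v$ at level $d-1$, while everything coming from level $d-1$ lies in the image of $\hat{\mathtt{l}}_k-m_k$, which by the simple-spectrum statement of Lemma~\ref{lemnn1} meets the span of $v$ and $\mathtt{l}_1v$ only in multiples of the eigenvector $m_{k+1}v+z^k\mathtt{l}_1v$; requiring cancellation for both $k=2$ and $k=3$ forces $m_3=2zm_2$ and $m_4=3z^2m_2$. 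None of this computation appears in your proposal.

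Second, the step you call the ``main obstacle'' is in fact immediate and uses no hypothesis: $W_{\mathbf{m}}=U(\mathfrak{b})\otimes_{U(\mathfrak{n})}V_{\mathbf{m}}$ is generated by $1\otimes V_{\mathbf{m}}$ as a $\mathfrak{b}$-module (indeed, as you yourself note, any $\mathbb{C}[\mathtt{l}_0]$-stable subspace containing $1\otimes V_{\mathbf{m}}$ is all of $W_{\mathbf{m}}$), so $N\supseteq V_{\mathbf{m}}$ gives $N=W_{\mathbf{m}}$ at once. The scenario you then analyze at length --- a nonzero quotient $W_{\mathbf{m}}/N$ with $V_{\mathbf{m}}\subseteq N$ on which $\mathfrak{n}$ acts trivially, with a Gr\"obner computation to rule it out --- is vacuous and plays no role. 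As written, the proof does not go through; the missing ingredient is the eigenvector analysis in the degree-lowering step.
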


\begin{proof} 
Consider $V=V_{\mathbf{m}}$ and let $W$  be the induced $\mathfrak{b}$-module. Then every element 
in $W$ can be written as $\sum_{i\geq 0} \mathtt{l}_0^i v_i$ where  $v_i\in V$ and only finitely many of them
are nonzero. Let $X$ be a nonzero submodule of $W$ and $x\in X$ be such that, when written in the above form, 
the maximal  $i$ such that $v_i\neq 0$ is minimal possible, let it be $N$.

Since $V$ is a simple $\mathfrak{n}$-module, using the action of $\mathfrak{n}$ we may assume that $v_N$ is the 
canonical generator $v$ of $V$. We have $\hat{\mathtt{l}}_k\cdot v=m_k v$.

We claim that $y=(\hat{\mathtt{l}}_k-m_k)x$ is nonzero for some $k$ (which reduces $N$ giving a contradiction). 
For this we show that when we write $y$ in the above form, then the coefficient at $\mathtt{l}_0^{N-1}$ will be nonzero.
Clearly, $y$ will not contain any coefficient at $\mathtt{l}_0^N$.

Look at $\mathtt{l}_0^{N-1}V$ (modulo smaller powers of $\mathtt{l}_0$) and choose there an eigenbasis 
for $\hat{\mathtt{l}}_k$ as given by
Lemma~\ref{lemnn1}. There is one eigenvector $\mathtt{l}_0^{N-1}v$ with eigenvalue $m_k$ and the spectrum of
$\hat{\mathtt{l}}_k$ is simple, which means that this eigenvector is not in the image of 
$\hat{\mathtt{l}}_k-m_k$. Hence it is enough to show that $y$ contains a nonzero component at this 
eigenvector coming from level $N$ (and hence this component cannot cancel with anything from level $N-1$ as it 
is not in the image).

Without loss of generality  (namely, by factoring out smaller powers of  $\mathtt{l}_0$) we may assume $N=1$. 
In this case we get that the eigenvector at level $0$ with eigenvalue $m_k+(1-k)z^k$ is  $m_{k+1}v+z^k\mathtt{l}_1v$ 
while the contribution from level $1$ is $km_kv+(k-1)z^{k-1}\mathtt{l}_1v$ (up to a nonzero constant). 
For these two vectors to be linearly dependent we get the equality 
$(k-1)m_{k+1}z^{k-1}=km_kz^k$. This reduces, by recursion, to 
the equations $m_k=(k-1)m_2z^{k-1}$ for $k>2$. The claim follows.
\end{proof}

We will also need the following property of $W_{\mathbf{m}}$:

\begin{lemma}\label{lem95}
Assume that $\mathbf{m}$ satisfies \eqref{conditions}.
\begin{enumerate}[$($a$)$]

\item\label{lem95.1}
The kernels of both
$\hat{\mathtt{l}}_2-m_2$ and $\hat{\mathtt{l}}_3-m_3$ on $W_{\mathbf{m}}$
coincide with $\langle v\rangle$.
\item\label{lem95.2}
The module $W_{\mathbf{m}}$ does not contain any element $x$ such that 
both $(\hat{\mathtt{l}}_2-m_2)\cdot x=v$ and $(\hat{\mathtt{l}}_3-m_3)\cdot x=zv$. 
\end{enumerate}
\end{lemma}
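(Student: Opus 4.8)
The plan is to analyze the action of $\hat{\mathtt{l}}_2-m_2$ and $\hat{\mathtt{l}}_3-m_3$ on $W_{\mathbf{m}}$ by filtering $W_{\mathbf{m}}$ by powers of $\mathtt{l}_0$, using the basis $\{\mathtt{l}_0^j\mathtt{l}_1^n v : j,n\geq 0\}$ coming from the PBW theorem applied to $\mathfrak{b}=\mathfrak{n}\oplus\mathbb{C}\mathtt{l}_0$ together with the basis of $V_{\mathbf{m}}$ from Section~\ref{s3}. The key computational input is the ``leading term'' behavior: for the action on $V_{\mathbf{m}}$, Lemma~\ref{lemnn1} (and especially the refinement \eqref{eq1}) tells us $\hat{\mathtt{l}}_k\cdot\mathtt{l}_1^n v=(m_k+nz^k(1-k))\mathtt{l}_1^n v$ modulo lower powers of $\mathtt{l}_1$; and passing $\hat{\mathtt{l}}_k$ through $\mathtt{l}_0$ produces a commutator term $[\hat{\mathtt{l}}_k,\mathtt{l}_0]=k\,\mathtt{l}_k=k(\hat{\mathtt{l}}_k+z^{k-1}\mathtt{l}_1)$, so $(\hat{\mathtt{l}}_k-m_k)$ strictly lowers the $\mathtt{l}_0$-degree by exactly one on the ``highest weight line'' and is injective on the leading symbol in a way governed by simple spectra. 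The strategy is essentially the same as in the proof of Proposition~\ref{prop:borel}: track what happens at the top $\mathtt{l}_0$-level.

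For part \eqref{lem95.1}: Given $x=\sum_{j=0}^N\mathtt{l}_0^j v_j$ with $v_N\neq 0$ in the kernel of $\hat{\mathtt{l}}_k-m_k$ (for $k=2$ or $k=3$), I compute the coefficient of $\mathtt{l}_0^N$ in $(\hat{\mathtt{l}}_k-m_k)x$: it equals $(\hat{\mathtt{l}}_k-m_k)v_N$ acting inside $V_{\mathbf{m}}$. Using the eigenbasis $\{v_n^{(k)}\}$ from Lemma~\ref{lemnn1}, the spectrum $\{m_k+nz^k(1-k):n\geq 0\}$ is simple because $z\neq 0$ and $k\neq 1$, so $(\hat{\mathtt{l}}_k-m_k)$ on $V_{\mathbf{m}}$ has kernel exactly $\langle v\rangle=\langle v_0^{(k)}\rangle$. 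Hence $v_N\in\langle v\rangle$, say $v_N=v$ after rescaling, and then the $\mathtt{l}_0^{N-1}$-coefficient of $(\hat{\mathtt{l}}_k-m_k)x$ is $(\hat{\mathtt{l}}_k-m_k)v_{N-1}+ (\text{commutator term from }\mathtt{l}_0^N v)$. The commutator contribution is $N\,z^{k-1}\mathtt{l}_1 v$ up to the eigen-line, which is nonzero and (since it is not $v$ itself, $k\geq 2$) has nonzero component on eigenvectors $v_n^{(k)}$ with $n\geq 1$, i.e. outside the kernel; as in Proposition~\ref{prop:borel} this component cannot be cancelled by $(\hat{\mathtt{l}}_k-m_k)v_{N-1}$ which lands in the image. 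So $N=0$ is forced, i.e. $x=v_0\in V_{\mathbf{m}}$ with $(\hat{\mathtt{l}}_k-m_k)v_0=0$, giving $x\in\langle v\rangle$.

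For part \eqref{lem95.2}: Suppose $x$ satisfies $(\hat{\mathtt{l}}_2-m_2)x=v$ and $(\hat{\mathtt{l}}_3-m_3)x=zv$. By part \eqref{lem95.1} and the fact that $v$ is at $\mathtt{l}_0$-level $0$, the same leading-term analysis shows $x$ must lie at $\mathtt{l}_0$-level $0$, i.e. $x\in V_{\mathbf{m}}$, so the problem reduces to $V_{\mathbf{m}}$: I must show there is no $w\in V_{\mathbf{m}}$ with $(\hat{\mathtt{l}}_2-m_2)w=v$ and $(\hat{\mathtt{l}}_3-m_3)w=zv$ simultaneously. Write $w$ in the $\mathtt{l}_1$-basis and extract leading $\mathtt{l}_1$-degrees: since $v$ has $\mathtt{l}_1$-degree $0$ while $(\hat{\mathtt{l}}_2-m_2)$ and $(\hat{\mathtt{l}}_3-m_3)$ act invertibly off the constant line with explicit eigenvalues, $w$ is forced to lie in the span of $v$ and $\mathtt{l}_1 v$. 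Now I write $w=\alpha v+\beta\mathtt{l}_1 v$ and compute both equations explicitly using the commutator formulas $[\hat{\mathtt{l}}_k,\mathtt{l}_1]=(1-k)\mathtt{l}_{k+1}=(1-k)(\hat{\mathtt{l}}_{k+1}+z^k\mathtt{l}_1)$ together with $\hat{\mathtt{l}}_k v=m_k v$; this yields a linear system in $\alpha,\beta$ whose consistency is exactly the statement that the displayed determinant in \eqref{conditions} — combining $2zm_2\neq m_3$, $3zm_3\neq 2m_4$, and $z^2m_2+m_4\neq 2zm_3$ — is violated, i.e. under \eqref{conditions} the system is inconsistent. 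The main obstacle is bookkeeping: making the reduction ``$x$ lives at $\mathtt{l}_0$-level $0$ and then at $\mathtt{l}_1$-degree $\leq 1$'' rigorous requires carefully iterating the leading-symbol argument, and then matching the resulting $2\times 2$ (really overdetermined) linear system precisely against the four inequalities in \eqref{conditions} so that each hypothesis is used.
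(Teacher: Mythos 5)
Your overall framework (filter $W_{\mathbf{m}}$ by $\mathtt{l}_0$-degree and track leading terms via the eigenbasis of Lemma~\ref{lemnn1}) is the same as the paper's, but both parts of your argument contain genuine errors. In part \eqref{lem95.1}, after normalizing $v_N=v$ you claim the commutator contribution from $\mathtt{l}_0^Nv$ to the $\mathtt{l}_0^{N-1}$-level ``cannot be cancelled by $(\hat{\mathtt{l}}_k-m_k)v_{N-1}$'' because it has a nonzero component on the eigenvectors $v_n^{(k)}$ with $n\geq 1$. This is backwards: those eigenvectors have eigenvalue different from $m_k$ and therefore lie \emph{in} the image of $(\hat{\mathtt{l}}_k-m_k)|_{V_{\mathbf{m}}}$, so exactly those components \emph{can} be cancelled by a suitable choice of $v_{N-1}$. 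The only obstruction is the component along $v_0^{(k)}=v$. Since $[\hat{\mathtt{l}}_k,\mathtt{l}_0]v=-km_kv-(k-1)z^{k-1}\mathtt{l}_1v$ while $v_1^{(k)}$ is proportional to $m_{k+1}v+z^k\mathtt{l}_1v$, that component vanishes precisely when $km_kz^k=(k-1)m_{k+1}z^{k-1}$, i.e.\ when $2zm_2=m_3$ (for $k=2$) or $3zm_3=2m_4$ (for $k=3$). These are two of the hypotheses in \eqref{conditions}, which your argument for (a) never invokes; and indeed if $2zm_2=m_3$ then $(\hat{\mathtt{l}}_2-m_2)(z\mathtt{l}_0v-\mathtt{l}_1v)=(m_3-2zm_2)v=0$, so the kernel is strictly larger than $\langle v\rangle$ and the statement itself fails. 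Without locating the obstruction on the $v_0^{(k)}$-line and using these two inequalities, part (a) is not proved.

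In part \eqref{lem95.2} the reduction ``$x$ must lie at $\mathtt{l}_0$-level $0$'' is false. The leading-term analysis only forces the top $\mathtt{l}_0$-component of $x$ to be killed by $\hat{\mathtt{l}}_k-m_k$ on its layer, so $x$ can (and must) have $\mathtt{l}_0$-degree $1$: the solutions of $(\hat{\mathtt{l}}_2-m_2)\cdot x=v$ are exactly $\tfrac{1}{m_3-2zm_2}(z\mathtt{l}_0v-\mathtt{l}_1v)+\alpha v$. On $V_{\mathbf{m}}$ itself $v$ is not even in the image of $\hat{\mathtt{l}}_2-m_2$ (simple spectrum), so your $2\times 2$ system in $\alpha,\beta$ is inconsistent for a trivial reason that never produces the inequality $z^2m_2+m_4\neq 2zm_3$ --- which is precisely the condition needed here. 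The paper instead exhibits the explicit solutions $y_2=\tfrac{1}{m_3-2zm_2}(z\mathtt{l}_0v-\mathtt{l}_1v)$ and $y_3=\tfrac{z}{2m_4-3zm_3}(z\mathtt{l}_0v-\mathtt{l}_1v)$ of the two equations and notes that, by part (a), the full solution sets are $y_2+\langle v\rangle$ and $y_3+\langle v\rangle$; these are disjoint because $y_2-y_3$ is a nonzero multiple of $z\mathtt{l}_0v-\mathtt{l}_1v\notin\langle v\rangle$, and that multiple is nonzero exactly because $z^2m_2+m_4\neq 2zm_3$. Your write-up needs to be repaired along these lines before it constitutes a proof.
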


\begin{proof}
Set $y_2:=\frac{1}{m_3-2zm_2}(z\mathtt{l}_0v-\mathtt{l}_1v)$ and 
$y_3:=\frac{z}{2m_4-3zm_3}(z\mathtt{l}_0v-\mathtt{l}_1v)$ 
and note that this is well-defined because of \eqref{conditions}, moreover,  $y_2\neq y_3$ (again by \eqref{conditions}).
A direct computation shows that $(\hat{\mathtt{l}}_2-m_2)\cdot y_2=v$ and
$(\hat{\mathtt{l}}_3-m_3)\cdot y_3=zv$.

Consider the filtration of $W_{\mathbf{m}}$ by $\mathbf{n}$-submodules given by the degree of $\mathtt{l}_0$.
Every layer of this filtration is isomorphic to $V_{\mathbf{m}}$ and from Lemma~\ref{lemnn1} it follows that 
the kernel of $\hat{\mathtt{l}}_2-m_2$  on every layer of this filtration is $1$-dimensional. The computation 
from the previous paragraph implies that each nonzero kernel element at the layer $k$ is sent to a nonzero kernel element 
at  the layer $k-1$. This implies that the kernel of $\hat{\mathtt{l}}_2-m_2$ on $W_{\mathbf{m}}$ coincides
with the kernel of $\hat{\mathtt{l}}_2-m_2$ on the first layer (corresponding to $\mathtt{l}_0^0$). 
The latter one is  exactly  $\langle v\rangle$ by Lemma~\ref{lemnn1}. This proves claim~\eqref{lem95.1} for
$\hat{\mathtt{l}}_2-m_2$ and for $\hat{\mathtt{l}}_3-m_3$ the arguments are similar.

From claim ~\eqref{lem95.1} and the computation above it follows that the equation 
$(\hat{\mathtt{l}}_2-m_2)\cdot x=v$ has solutions $y_2+\alpha v$, $\alpha\in\mathbb{C}$.
None of these equals $y_3$ which implies claim~\eqref{lem95.2}, completing the proof.
\end{proof}

\section{Proof of Theorem~\ref{themmain}}\label{s5}

To prove Theorem~\ref{themmain} we will use a variation of the argument from the proof of
Proposition~\ref{prop:borel}. Assume that $\mathbf{m}\in\mathbb{C}^3$ satisfies \eqref{conditions}.
For $\theta\in\mathbb{C}$, consider the module $\mathrm{Ind}_{z,\theta}(\mathbb{C}_{\mathbf{m}})$.

Denote by $\mathbf{M}$ the set of all infinite vectors $\mathbf{i}=(\dots,i_2,i_1)$ with non-negative 
integer coefficients in which only finitely many coordinates are nonzero. For $\mathbf{i}\in \mathbf{M}$ set
\begin{displaymath}
\mathtt{l}^{\mathbf{i}}:=\dots\mathtt{l}_{-2}^{i_2}\mathtt{l}_{-1}^{i_1}\in U(\mathfrak{V}). 
\end{displaymath}

For $\mathbf{i}\in\mathbf{M}$, the {\em degree} $\mathbf{d}(\mathbf{i})$ is defined as
$\sum_{s\geq 0} i_s$, and the {\em weight} $\mathbf{w}(\mathbf{i})$ is defined as
$\sum_{s\geq 0} si_s$. Let $\mathbf{0}=(\dots,0,0,0)$ and for $s\in\mathbb{Z}_0$ let
$\varepsilon_s$ be the element $(\dots,i_2,i_1)$ such that $i_s=1$ and $i_t=0$ for all $t\neq s$.
Define a total order $\prec$ on $\mathbf{M}$ recursively as follows: $\mathbf{i}\prec\mathbf{j}$ if 
and only if
\begin{itemize}
\item $\mathbf{w}(\mathbf{i})<\mathbf{w}(\mathbf{j})$; or
\item $\mathbf{w}(\mathbf{i})=\mathbf{w}(\mathbf{j})$ and $\mathbf{d}(\mathbf{i})<\mathbf{d}(\mathbf{j})$; or
\item $\mathbf{w}(\mathbf{i})=\mathbf{w}(\mathbf{j})$ and $\mathbf{d}(\mathbf{i})=\mathbf{d}(\mathbf{j})$ and 
\begin{displaymath}
\min\{s\vert i_s\neq 0\}>\min\{s\vert j_s\neq 0\}\text{; or}
\end{displaymath}
\item $\mathbf{w}(\mathbf{i})=\mathbf{w}(\mathbf{j})$ and $\mathbf{d}(\mathbf{i})=\mathbf{d}(\mathbf{j})$ and 
\begin{displaymath}
\min\{s\vert i_s\neq 0\}=\min\{s\vert j_s\neq 0\}=p
\end{displaymath}
and $\mathbf{i}-\varepsilon_p\prec \mathbf{j}-\varepsilon_p$.
\end{itemize}
Clearly, the element $\mathbf{0}$ is the minimum element with respect to this order.

Every element in $\mathrm{Ind}_{z,\theta}(\mathbb{C}_{\mathbf{m}})$ can be uniquely written as a 
sum of $\mathtt{l}^{\mathbf{i}}v_{\mathbf{i}}$, where $\mathbf{i}\in \mathbf{M}$ and 
$v_{\mathbf{i}}\in W_{\mathbf{m}}$ (where only finitely many of the $v_{\mathbf{i}}$'s are non-zero). 
If $x$ is a non-zero element written in this form, then the
{\em support} of $x$ is defined as the following finite set:
\begin{displaymath}
\mathrm{supp}(x):=\{\mathbf{i}:v_{\mathbf{i}}\neq 0\} 
\end{displaymath}
and the {\em maximal term} $\mathbf{t}(x)$ of $x$ is the maximal element in $\mathrm{supp}(x)$ 
(with respect to $\prec$).  Let $M$ be a non-zero submodule in $\mathrm{Ind}_{z,\theta}(\mathbb{C}_{\mathbf{m}})$. 
Denote by  $\mathbf{i}$ the minimal element in the set $\{\mathbf{t}(x):x\in M,x\neq 0\}$. 
To prove Theorem~\ref{themmain} we have to show that $\mathbf{i}=\mathbf{0}$.

\begin{lemma}\label{lem91}
Let $x\in M$, $x\neq 0$, and $u\in\mathfrak{b}$. If $ux\neq 0$, then $\mathbf{t}(ux)\preceq\mathbf{t}(x)$.
\end{lemma}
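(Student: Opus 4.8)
\textbf{Proof proposal for Lemma~\ref{lem91}.}

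The plan is to reduce to the case where $x = \mathtt{l}^{\mathbf{i}}v$ is a single term with $\mathbf{i} = \mathbf{t}(x)$ and $v = v_{\mathbf{i}}\in W_{\mathbf{m}}$, and where $u = \mathtt{l}_j$ is a single basis element of $\mathfrak{b}$ (so $j\geq -N$ for some $N$; the cases $j\geq 1$, $j=0$, and $j<0$ will be treated, with $j<0$ being where all the weight bookkeeping happens). By linearity of the action and additivity of $\mathrm{supp}$, to bound $\mathbf{t}(ux)$ it suffices to bound $\mathbf{t}(\mathtt{l}_j\cdot\mathtt{l}^{\mathbf{i}}v_{\mathbf{i}})$ for each $\mathbf{i}\in\mathrm{supp}(x)$ by $\mathbf{t}(x)$, since the supremum of these bounds is $\mathbf{t}(x)$; so I really only need to understand $\mathtt{l}_j\cdot\mathtt{l}^{\mathbf{i}}w$ for $w\in W_{\mathbf{m}}$ and a single $\mathbf{i}\in\mathbf{M}$.

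First I would handle the easy directions. If $j\geq 0$, then $\mathtt{l}_j\in\mathfrak{b}$, and moving $\mathtt{l}_j$ past the negative-index monomial $\mathtt{l}^{\mathbf{i}}$ using the bracket relations either lands it in $\mathfrak{b}$ acting on $w$ (a term with the \emph{same} monomial $\mathtt{l}^{\mathbf{i}}$, hence the same index $\mathbf{i}$) or produces commutator terms of the form $\mathtt{l}^{\mathbf{i}'}w'$ where $\mathbf{i}'$ is obtained from $\mathbf{i}$ by replacing one factor $\mathtt{l}_{-s}$ by $\mathtt{l}_{j-s}$; since $j\geq 0$, such an $\mathbf{i}'$ has strictly smaller weight (we removed weight $s$ and added back at most $s$, in fact strictly less unless the factor disappears into $\mathfrak{b}$) and strictly smaller or equal degree, so $\mathbf{i}'\prec\mathbf{i}$. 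Hence in this case $\mathbf{t}(\mathtt{l}_j x)\preceq\mathbf{i}=\mathbf{t}(x)$, with the maximal contribution coming from the ``same monomial'' term. The central element $\mathtt{c}$ acts as the scalar $\theta$ and changes nothing.

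The main work is the case $u=\mathtt{l}_{-p}$ with $p\geq 1$. Write $\mathbf{i}=(\dots,i_2,i_1)$ and $\mathtt{l}^{\mathbf{i}}=\dots\mathtt{l}_{-2}^{i_2}\mathtt{l}_{-1}^{i_1}$. Then $\mathtt{l}_{-p}\cdot\mathtt{l}^{\mathbf{i}}w$: the leading term, obtained by placing $\mathtt{l}_{-p}$ in front, is $\mathtt{l}_{-p}\mathtt{l}^{\mathbf{i}}w$, whose monomial is $\mathbf{i}+\varepsilon_p$ — this has weight $\mathbf{w}(\mathbf{i})+p$ and degree $\mathbf{d}(\mathbf{i})+1$. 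The other terms come from commuting $\mathtt{l}_{-p}$ with some factor $\mathtt{l}_{-s}$ inside $\mathtt{l}^{\mathbf{i}}$, producing a factor $\mathtt{l}_{-p-s}$ (weight $p+s$ replacing weights $p$ and $s$: net weight change $0$, degree drops by $1$), or — if $p=s$ — a central term (weight drops by $2p$), or from $\mathtt{l}_{-p}$ reaching $w$ and acting in $\mathfrak{b}$, i.e. when $p\leq 0$, which does not occur here since $p\geq 1$; but actually $\mathtt{l}_{-p}$ with $p\geq 1$ acting on $w\in W_{\mathbf{m}}$ must first be commuted into $\mathfrak{b}$-degree, which is subsumed in the monomial reordering. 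In \emph{every} non-leading term the monomial $\mathbf{j}$ satisfies either $\mathbf{w}(\mathbf{j})<\mathbf{w}(\mathbf{i})+p$, or $\mathbf{w}(\mathbf{j})=\mathbf{w}(\mathbf{i})+p$ with $\mathbf{d}(\mathbf{j})<\mathbf{d}(\mathbf{i})+1$; in either case $\mathbf{j}\prec\mathbf{i}+\varepsilon_p$. So $\mathbf{t}(\mathtt{l}_{-p}\cdot\mathtt{l}^{\mathbf{i}}w)\preceq\mathbf{i}+\varepsilon_p$. The remaining point is that $\mathbf{i}+\varepsilon_p\preceq\mathbf{t}(x)$: but this is not true in general for a single term — \emph{it is precisely here that I must use that $\mathbf{i}$ ranges over all of $\mathrm{supp}(x)$, not just the maximal term.} Hmm — in fact the correct statement is weaker than I first set up: applying $\mathtt{l}_{-p}$ genuinely \emph{can} raise the maximal term. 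Let me reconsider: the lemma as stated claims $\mathbf{t}(ux)\preceq\mathbf{t}(x)$, so evidently $\mathfrak{b}$ is being used in an essential way — $\mathfrak{b}$ contains only $\mathtt{l}_j$ with $j\geq 0$ together with $\mathtt{l}_0$, and \emph{no} negative $\mathtt{l}_{-p}$. So in fact only the $j\geq 0$ analysis above is needed, and the negative case does not arise. Thus the lemma follows entirely from the first (easy) case: for $u=\mathtt{l}_j$ with $j\geq 0$ (or $u=\mathtt{l}_0$, or $u=\mathtt{c}$), commuting $u$ leftward past $\mathtt{l}^{\mathbf{i}}$ yields the same-monomial term $\mathtt{l}^{\mathbf{i}}(u\cdot w)$ (with the same index $\mathbf{i}$) plus commutator terms with indices strictly $\prec\mathbf{i}$; taking the sup over $\mathbf{i}\in\mathrm{supp}(x)$ and over the (finitely many, since $u\in\mathfrak{b}$ is a finite linear combination of such $\mathtt{l}_j$) summands gives $\mathbf{t}(ux)\preceq\mathbf{t}(x)$.

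\textbf{Main obstacle.} The only subtlety is the careful case-analysis of the monomial indices produced when commuting $\mathtt{l}_j$ ($j\geq 0$) past $\mathtt{l}^{\mathbf{i}}$ and verifying, against the four-clause definition of $\prec$, that every commutator term is strictly below $\mathbf{i}$; in particular one must check the weight drops by at least $\min(j,s)>0$ (or the factor is absorbed into $\mathfrak{b}$, dropping the degree) whenever $\mathtt{l}_j$ hits a factor $\mathtt{l}_{-s}$, and that the ``pass-through'' term keeps the monomial exactly $\mathbf{i}$ so it cannot exceed $\mathbf{t}(x)$. This is routine but must be done with the recursive clause of $\prec$ in mind.
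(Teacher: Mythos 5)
Your proposal is correct and follows essentially the same route as the paper: reduce by linearity to $u=\mathtt{l}_k$ with $k\geq 0$ acting on a single term $\mathtt{l}^{\mathbf{i}}v_{\mathbf{i}}$, then observe that commuting $\mathtt{l}_k$ rightward through $\mathtt{l}^{\mathbf{i}}$ produces only monomials $\mathtt{l}^{\mathbf{j}}$ with $\mathbf{j}\preceq\mathbf{i}$ (possibly with an element of $\mathfrak{b}$ acting on $v_{\mathbf{i}}$). The only nitpick is that for $k=0$ the commutator $[\mathtt{l}_0,\mathtt{l}_{-s}]=-s\mathtt{l}_{-s}$ yields the \emph{same} index rather than a strictly smaller one, but since the lemma only asserts $\preceq$ this does not affect the argument.
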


\begin{proof}
By linearity, it is enough to prove the claim for $u=\mathtt{l}_k$ and $x=\mathtt{l}^{\mathbf{i}}v_{\mathbf{i}}$.
We have $\mathtt{l}_k\cdot  \mathtt{l}^{\mathbf{i}}v_{\mathbf{i}}=
\mathtt{l}^{\mathbf{i}}\mathtt{l}_k\cdot v_{\mathbf{i}}+
[\mathtt{l}_k,\mathtt{l}^{\mathbf{i}}]v_{\mathbf{i}}$.
From the definition of $\prec$ it follows that, moving $\mathtt{l}_k$ to the right in
$[\mathtt{l}_k,\mathtt{l}^{\mathbf{i}}]$, we get a linear combination of some $\mathtt{l}^{\mathbf{j}}$
(with possible element from $\mathfrak{b}$ on the right)
where $\mathbf{j}\preceq\mathbf{i}$. The claim follows.
\end{proof}

Assume that $\mathbf{i}\neq \mathbf{0}$ and let $x\in M$ be some non-zero element with maximal term $\mathbf{i}$. 
Since $W_{\mathbf{m}}$ is a simple module by Proposition~\ref{prop:borel}, without loss of generality we
may assume $v_{\mathbf{i}}=v$ (the canonical generator of $V_{\mathbf{m}}$). For $k\geq 2$ consider the 
element $y_k:=(\hat{\mathtt{l}}_k-m_k)\cdot x$ and we have either $y_k= 0$ or 
$\mathbf{i}\not\in\mathrm{supp}(y_k)$. Therefore $y_k\neq 0$ implies $\mathbf{t}(y_k)\prec \mathbf{i}$
by Lemma~\ref{lem91}, which gives a contradiction completing the proof of Theorem~\ref{themmain}.
In what follows we show that either $y_2\neq 0$ or $y_3\neq 0$.

Let $p:=\min\{s:i_s\neq 0\}$ and consider first the case $p>1$. We claim that 
$\mathbf{j}:=\mathbf{i}-\varepsilon_p+\varepsilon_{p-1}$ belongs to either 
$\mathrm{supp}(y_2)$ or $\mathrm{supp}(y_3)$, which implies that at least one of these elements is
nonzero. Assume that this is not the case and let $k\in\{2,3\}$. Let $\mathbf{i}'\in \mathrm{supp}(x)$ be different
from $\mathbf{i}$. Using the definition of $\prec$, it is easy to see that, writing 
$[\hat{\mathtt{l}}_k-m_k,\mathtt{l}^{\mathbf{i}'}]$  as a linear combination 
of $\mathtt{l}^{\mathbf{s}}u_{\mathbf{s}}$, where $u_{\mathbf{s}}\in\mathfrak{b}$, all $\mathbf{s}$ which will appear 
with nonzero $u_{\mathbf{s}}$ satisfy $\mathbf{s}\prec \mathbf{j}$.

At the same time, $[\hat{\mathtt{l}}_k-m_k,\mathtt{l}^{\mathbf{i}}]$ contributes with
$-i_p(p+1)z^{k-1}\mathtt{l}^{\mathbf{j}}$. If $y_k=0$, then to cancel this contribution, we thus must have 
$\mathbf{j}\in \mathrm{supp}(x)$ and $v_{\mathbf{j}}\in W_{\mathbf{m}}$ should satisfy 
$(\hat{\mathtt{l}}_k-m_k)\cdot v_{\mathbf{j}}=i_p(p+1)z^{k-1}v$. However, such $v_{\mathbf{j}}$ does not exist
by Lemma~\ref{lem95}\eqref{lem95.2}, which completes the proof in the case $p>1$.

Assume now that $p=1$. Our argument will be similar to the one we used above, however, it will require 
a computationally more complicated analogue of Lemma~\ref{lem95}. We claim that 
$\mathbf{j}:=\mathbf{i}-\varepsilon_1$ belongs to either $\mathrm{supp}(y_2)$ or $\mathrm{supp}(y_3)$.
Assume that this is not the case. Consider the coefficient at $\mathtt{l}^{\mathbf{j}}$ in both
$y_2$ and $y_3$. Similarly to the above, the only contribution to this coefficient comes from
$[\hat{\mathtt{l}}_k-m_k,\mathtt{l}^{\mathbf{i}}]$ and from
$(\hat{\mathtt{l}}_k-m_k)\cdot v_{\mathbf{j}}$. So, if this coefficient is zero, these two contributions
should cancel each other for both $k=2$ and $k=3$.

One checks that the contribution from $[\hat{\mathtt{l}}_2-m_2,\mathtt{l}^{\mathbf{i}}]$ equals exactly
$i_1(-3\mathtt{l}_1+2z\mathtt{l}_0 +z(i_1-1))v$ and the contribution from 
$[\hat{\mathtt{l}}_3-m_3,\mathtt{l}^{\mathbf{i}}]$ equals $i_1(-4m_2-4z\mathtt{l}_1+2z^2\mathtt{l}_0+z^2(i_1-1))v$.
Now the claim follows, similarly to the above, from the following lemma:

\begin{lemma}\label{lem97}
Let $t\in\mathbb{N}$.
If $\mathbf{m}$ satisfies \eqref{conditions}, then $W_{\mathbf{m}}$ does not contain any element $x$ such that
\begin{displaymath}
\left\{
\begin{array}{lcl}
(\hat{\mathtt{l}}_2-m_2)\cdot x&=& t(-3\mathtt{l}_1+2z\mathtt{l}_0+z(t-1))v,\\
(\hat{\mathtt{l}}_3-m_3) \cdot x&= &t(-4m_2-4z\mathtt{l}_1+2z^2\mathtt{l}_0+z^2(t-1))v.
\end{array}
\right.
\end{displaymath}
\end{lemma}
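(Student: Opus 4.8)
\textbf{Proof proposal for Lemma~\ref{lem97}.}
The plan is to imitate the proof of Lemma~\ref{lem95}, replacing the single auxiliary vectors $y_2,y_3$ by explicit elements of $W_{\mathbf{m}}$ constructed on the first two layers of the $\mathtt{l}_0$-degree filtration, and then show that the two equations cannot be solved simultaneously. First I would argue that any putative solution $x$ lies in low $\mathtt{l}_0$-degree: the right-hand sides live in $\mathtt{l}_0 V_{\mathbf{m}}+V_{\mathbf{m}}$, and by the layer-by-layer analysis already used in Lemma~\ref{lem95} (each layer is a copy of $V_{\mathbf{m}}$ on which $\hat{\mathtt{l}}_2-m_2$ and $\hat{\mathtt{l}}_3-m_3$ have one-dimensional kernel, and a kernel element on layer $j$ maps to a kernel element on layer $j-1$), one shows that the top $\mathtt{l}_0$-degree of $x$ can be at most $2$, and in fact that it suffices to solve for $x$ of the form $x=\alpha\,\mathtt{l}_0^2 v+\beta\,\mathtt{l}_0\mathtt{l}_1 v+\gamma\,\mathtt{l}_1^2 v+\delta\,\mathtt{l}_0 v+\varepsilon\,\mathtt{l}_1 v+\zeta\, v$ (modulo adding multiples of $v$ from Lemma~\ref{lem95}\eqref{lem95.1}, which changes nothing). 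So the whole lemma reduces to a finite linear-algebra problem inside the finite-dimensional space spanned by these six vectors.

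Next I would compute the action of $\hat{\mathtt{l}}_2$ and $\hat{\mathtt{l}}_3$ on this six-dimensional space. Using $\hat{\mathtt{l}}_k\cdot v=m_k v$, the commutators $[\hat{\mathtt{l}}_k,\mathtt{l}_1]=(1-k)\hat{\mathtt{l}}_{k+1}+((1-k)z^k)\mathtt{l}_1$ in $\mathfrak{b}$ wait---rather, $[\mathtt{l}_k-z^{k-1}\mathtt{l}_1,\mathtt{l}_1]=(1-k)\mathtt{l}_{k+1}+$(terms in $\mathtt{l}_1$) rewritten via $\hat{\mathtt{l}}_{k+1}$, together with $[\mathtt{l}_k,\mathtt{l}_0]=k\mathtt{l}_k$ and the formulas \eqref{eq2} for the $m_k$, one writes down the $6\times 6$ matrices of $\hat{\mathtt{l}}_2-m_2$ and $\hat{\mathtt{l}}_3-m_3$ relative to this basis. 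Then the two displayed equations become an explicit inhomogeneous linear system in $(\alpha,\beta,\gamma,\delta,\varepsilon,\zeta)$ whose right-hand side is read off from $t(-3\mathtt{l}_1+2z\mathtt{l}_0+z(t-1))v$ and $t(-4m_2-4z\mathtt{l}_1+2z^2\mathtt{l}_0+z^2(t-1))v$. One then checks that this system is inconsistent precisely when the four inequalities in \eqref{conditions} hold: solving the first equation for $x$ (which is possible, with a one-parameter family of solutions parametrised by the $v$-coefficient, exactly as in Lemma~\ref{lem95}), one substitutes into the second and finds that the resulting scalar condition is a nonzero multiple of one of the four expressions $z^2m_2+m_4-2zm_3$, $zm_3-m_4$, $2zm_2-m_3$, $3zm_3-2m_4$ (or a product thereof), so it cannot vanish under \eqref{conditions}.

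The main obstacle is purely computational: correctly bookkeeping the commutators so that everything is expressed in the PBW basis $\{\mathtt{l}_0^a\mathtt{l}_1^b v\}$ of $W_{\mathbf{m}}$ (remembering that $\hat{\mathtt{l}}_k v=m_k v$ but $\hat{\mathtt{l}}_k\mathtt{l}_1 v$ and $\hat{\mathtt{l}}_k\mathtt{l}_0 v$ produce the ``defect'' terms $(1-k)\hat{\mathtt{l}}_{k+1}$ and $k\mathtt{l}_k$ which must again be reduced), and then verifying that the single resulting obstruction factors through the given inequalities. I expect this to come down, after simplification, to showing that a certain $2\times 2$ determinant in $m_2,m_3,m_4,z$ equals (up to a nonzero scalar and powers of $z$) the product of two of the linear forms in \eqref{conditions}; isolating exactly which two, and checking the arithmetic, is where care is needed. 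Once the inconsistency is established, the non-existence of $x$ follows immediately, completing the proof of the lemma and hence of Theorem~\ref{themmain}.
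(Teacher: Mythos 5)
Your strategy is essentially the paper's: by Lemma~\ref{lem95}\eqref{lem95.1} each equation has at most a one\hyphen parameter solution set (a particular solution plus $\mathbb{C}v$), one exhibits explicit particular solutions living in the span of $\mathtt{l}_0^2v,\mathtt{l}_0\mathtt{l}_1v,\mathtt{l}_1^2v,\mathtt{l}_0v,\mathtt{l}_1v,v$, and one checks they cannot differ by a multiple of $v$ --- the paper does this by comparing the $\mathtt{l}_0^2$-coefficients, whose equality is exactly the excluded condition $z^2m_2+m_4=2zm_3$, confirming your guess about which linear forms appear. The only caveat is that the decisive arithmetic (the explicit solutions, found in the paper by MAPLE) is deferred in your write-up rather than carried out, and that your preliminary degree-bounding step, while valid, is unnecessary once Lemma~\ref{lem95}\eqref{lem95.1} is invoked.
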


\begin{proof}
By Lemma~\ref{lem95}\eqref{lem95.1}, each of the above equations, if solvable, has a $1$-dimensional set of solutions
(which differ by scalar multiples of $v$). It is straightforward to check that the element
{\small
\begin{multline*}
\frac{t}{2z^2m_2-zm_3}\big(-z^3\mathtt{l}_0^2-\\-
\frac{2z^5(1+t)m_2-z^4(4+t)m_3+2z^2m_2m_3+2z^3m_4-zm_3^2}{2z^2m_2-zm_3}\mathtt{l}_0
+\\+2z^2\mathtt{l}_0\mathtt{l}_1+\\
+\frac{2z^4tm_2+4z^2m_2^2-z^3(3+t)m_3-2zm_2m_3+2z^2m_4}{2z^2m_2-zm_3}\mathtt{l}_1
-z\mathtt{l}_1^2\big)\cdot v
\end{multline*}
}\noindent
solves the first equation and that the element
{\small
\begin{multline*}
\frac{t}{3zm_3-2m_4}\big(-z^3\mathtt{l}_0^2+(4zm_2-\frac{2}{z}m_4-tz^3)\mathtt{l}_0
+2z^2\mathtt{l}_0\mathtt{l}_1+\\
+(tz^2-z^2-4m_2+\frac{3}{z}m_3)\mathtt{l}_1-z\mathtt{l}_1^2\big)\cdot v
\end{multline*}
}\noindent
solves the second equation. Comparing the coefficient at $\mathtt{l}_0^2$, from \eqref{conditions} it follows 
that these two elements are different, moreover, they do not differ by a scalar multiple of $v$. The claim of 
the lemma follows.
\end{proof}

Both solutions in the proof of Lemma~\ref{lem97} were found and also Gr{\"o}bner basis computations in the 
proof of Proposition~\ref{prop2} were performed using MAPLE, but it is straightforward to check that they are correct.
Note that from the definition of our modules it follows immediately that different values of parameters
give rise to non-isomorphic simple modules. It is also easy to check (looking at the action of $\mathfrak{a}_z$)
that our modules are not isomorphic to any of the previously known simple Virasoro modules.

\vspace{0.2cm}

\noindent
V.M.: Department of Mathematics, Uppsala University, 
Box 480, SE-751 06, Uppsala, Sweden; e-mail: {\tt mazor\symbol{64}math.uu.se}
\vspace{0.5cm}

\noindent E.W.: Department of Mathematics, Ithaca College, Williams Hall
Ithaca, NY 14850, USA. e-mail:  {\tt ewiesner\symbol{64}ithaca.edu}


\begin{thebibliography}{99999}
\bibitem[BM]{BM} P.~Batra, V.~Mazorchuk; Blocks and modules for Whittaker pairs. J. Pure Appl. Algebra 
{\bf 215} (2011), no. 7, 1552--1568.
%\bibitem[Di]{Di} J.~Dixmier; Enveloping algebras. Revised reprint of the 1977 translation. Graduate Studies 
%in Mathematics, {\bf 11}. American Mathematical Society, Providence, RI, 1996.
\bibitem[FF]{FF} B.~Feigin, D.~Fuks; Verma modules over a Virasoro algebra. Funktsional. Anal. i Prilozhen. 
{\bf 17} (1983), no. 3, 91--92.
\bibitem[GWZ]{GWZ} X.~Guo, Y.~Wu, K.~Zhao; Fraction modules of the Virasoro algebra. Preprint, 2010. 
\bibitem[IK]{IK} K.~Iohara, Y.~Koga; Representation theory of the Virasoro algebra.
Springer Monographs in Mathematics. Springer-Verlag London, Ltd., London, 2011.
\bibitem[KR]{KR} V.~Kac, A.~Raina; Bombay lectures on highest weight representations of infinite-dimensional 
Lie algebras. Advanced Series in Mathematical Physics, {\bf 2}. World Scientific Publishing Co., Inc., Teaneck, 
NJ, 1987.
\bibitem[LGZ]{LGZ} R.~L{\"u}, X.~Guo, K.~Zhao; Irreducible modules over the Virasoro algebra. Doc. Math. {\bf 16} 
(2011), 709--721.
\bibitem[Mt]{Mt} O.~Mathieu; Classification of Harish-Chandra modules over the Virasoro Lie algebra. Invent. 
Math. {\bf 107} (1992), no. 2, 225--234.
%\bibitem[Mz]{Ma} V.~Mazorchuk; Lectures on $\mathfrak{sl}_2(\mathbb{C})$-modules. Imperial College Press, 
%London, 2010.
\bibitem[MZ1]{MZ1} V.~Mazorchuk, K.~Zhao;  Classification of simple weight Virasoro modules with a 
finite-dimensional weight space. J. Algebra {\bf 307} (2007), no. 1, 209--214.
\bibitem[MZ2]{MZ2} V.~Mazorchuk, K.~Zhao; Simple Virasoro modules which are locally finite over a positive part. 
Preprint arXiv:1205.5937.
\bibitem[OW]{OW} M.~Ondrus, E.~Wiesner; Whittaker modules for the Virasoro algebra. J. Algebra Appl. 
{\bf 8} (2009), no. 3, 363--377.
%\bibitem[Ya]{Ya} S.~Yanagida; Whittaker vectors of the Virasoro algebra in terms of Jack symmetric polynomial. 
%J. Algebra {\bf 333} (2011), 273--294.
\bibitem[Zh]{Zh} H.~Zhang; A class of representations over the Virasoro algebra. J. Algebra {\bf 190} 
(1997), no. 1, 1--10.
\end{thebibliography}
\end{document}